\newtheorem{thm}{Theorem}[section]
\newtheorem{lem}[thm]{Lemma}
\theoremstyle{definition}
\numberwithin{equation}{section}
\begin{document}

\title{\Large Representation functions in the set of natural numbers}
\author{Shi-Qiang Chen\thanks{School of Mathematics and Statistics,
Anhui Normal University,  Wuhu  241003,  P. R. China. Email: csq20180327@163.com.
This author is supported by the National Natural Science Foundation of China, Grant No.
12301003, the Anhui Provincial Natural Science Foundation (Grant No. 2308085QA02) and
the University Natural Science Research Project of Anhui Province (Grant No. 2022AH050171).},
 Csaba
S\'andor \thanks{Department of Stochastics, Institute of Mathematics and Department of Computer Science and Information Theory, Budapest University of
Technology and Economics, M\H{u}egyetem rkp. 3., H-1111 Budapest, Hungary; MTA-BME Lend\"ulet Arithmetic Combinatorics Research Group, ELKH, M\H{u}egyetem rkp. 3., H-1111 Budapest, Hungary. Email: csandor@math.bme.hu. This author was supported by the NKFIH Grants No. K129335.},
Quan-Hui Yang \thanks{School of Mathematics and Statistics, Nanjing University of Information Science and Technology, Nanjing 210044, China. Email: yangquanhui01@163.com. This author is supported by the National Natural Science Foundation of China, Grant No.
12371005. }
}
\date{}
\maketitle \baselineskip 18pt

 {\bf Abstract:} Let $\mathbb{N}$ be the set of all nonnegative integers. For $S\subseteq \mathbb{N}$ and $n\in \mathbb{N}$,
let $R_S(n)$ denote the number of  solutions of the equation
$n=s+s'$, $s, s'\in S$, $s<s'$.
 In this paper, we determine the structure of all sets $A$ and $B$ such that $A\cup B=\mathbb{N}\setminus\{r+mk:k\in\mathbb{N}\}$, $A\cap B=\emptyset$ and $R_{A}(n)=R_{B}(n)$ for every positive integer $n$, where $m$ and $r$ are two integers with $m\ge 2$ and $r\ge 0$.

{\bf Keywords:}  representation function; binary representation;
Thue-Morse sequence; characteristic function

2020 Mathematics Subject Classification: 11B34

\section{Introduction}

~~~Let $\mathbb{N}$ be the set of all nonnegative integers.  For $S
\subseteq \mathbb{N}$  and $n\in \mathbb{N}$, the representation function
 $R_1(S,n)$, $R_2(S,n)$, $R_3(S,n)$ denote
the number of representations of $n$ in the form:
(1) $n=s+s'$, $s, s'\in S$, (2) $n=s+s'$, $s<s'$, $s, s'\in S$, (3) $n=s+s'$, $s\le s'$, $s, s'\in S$, respectively. Clearly,
$R_1(S,n)=R_2(S,n)+R_3(S,n)$ for all nonnegative integers $n$.

In \cite{nathanson78},
Nathanson studied the function $R_1(S,n)$ and called it the representation function. After that, these three functions were studied by Erd\H{o}s, S\'{a}rk\"{o}zy and S\'{o}s in  a series of papers (see \cite{erdos1}-\cite{erdos5}).
They showed that these functions have some different properties. In recent decades, representation functions have been extensively studied by
many authors [1,2,7,14,15,21,22,24,27]
and are still a fruitful area of research in additive number
theory.

For
$i\in \{1,2,3\},$ S\'{a}rk\"{o}zy asked whether there are sets
$A,B\subseteq \mathbb{N}$ with infinite symmetric difference such
that $R_i(A,n)=R_i(B,n)$ for all sufficiently large integers $n$.
In 2002, Dombi \cite{Dombi} proved that the answer is negative for $i=1$ and positive for $i=2$.
Chen and Wang \cite{ChenWang2003} proved that the answer is positive for $i=3$.
In fact, Dombi (for $i=2$), Chen and Wang (for $i=3$) proved that there exists a set $A\subseteq \mathbb{N}$
 such that $R_i(A,n)=R_i(\mathbb{N}\backslash A,n)$ for all $n\geq n_0.$ Lev \cite{Lev} gave a simple common proof to these
results. Later, S\'{a}ndor \cite{Sandor} gave a complete answer by
using generating functions and Tang \cite{Tang} gave
an elementary proof.


Let $[a,b]$ denote the set of all integers $n$ such that $a\le n\le b$. For  $a\in \mathbb{N}$ and  $S
\subseteq \mathbb{N}$,  $a+S$ denotes the set of all integers of
the form $a+s$, $s\in S$.
  Let $U$ be the set of all nonnegative integers which contains an even
  number of digits $1$ in their binary representations and $V=\mathbb{N}\setminus U$. Put $U_l=U\cap [0,2^l-1]$
and $V_l=V\cap [0,2^l-1]$. In Dombi's  result, the set $A$ is the set $U$ here.

In 2012, Yu and Tang \cite{YuTang} considered a special case where the intersection of two sets is non-empty, and proposed the following conjecture: Let $m\in \mathbb{N}$ and $R\subseteq\{0,1,\ldots,m-1\}$.
If $\mathbb{N}=A\cup B$ and $A\cap B=\{r+km:~k\in\mathbb{N},r\in R\}$, then $R_i(A,n)=R_i(B,n)$ cannot hold for all sufficiently large integers $n$, where $i=2,3$.
In 2016, Tang \cite{Tang16} confirmed this conjecture in the case that $R=\{0\}$.
In 2016, Chen and Lev \cite{ChenLev} disproved this conjecture for $i=2$.

In this paper, based on Chen and Lev's result, we focus on the function $R_2(S,n)$. For brevity, we denote
$R_2(S,n)$ by $R_S(n)$ from now on. Chen and Lev \cite{ChenLev} proved that for any given
positive integer $l$, there exist two subsets $A,B\subseteq
\mathbb{N}$ with $\mathbb{N}=A\cup B$ and $A\cap B=\{
2^{2l}-1+k(2^{2l+1}-1): k\in\mathbb{N}\}$ such that
$R_A(n)=R_B(n)$ for every positive integer $n$. The following two
problems are posed in \cite{ChenLev}.\vskip 2mm

\noindent{\bf Problem 1.} Given that $R_{C}(n)=R_{D}(n)$ for every
positive integer $n$, $C\cup D=\mathbb{N}$, and $C\cap
D=r+m\mathbb{N}$ with integers $r\geq 0$ and $m\geq 2$, must there
exist an integer $l\geq1$ such that $r=2^{2l}-1,
m=2^{2l+1}-1$?\vskip 2mm

\noindent{\bf Problem 2.} Given that $R_{C}(n)=R_{D}(n)$ for every
positive integer $n$, $C\cup D=[0,m]$, and $C\cap D=\{r\}$ with
integers $r\geq0$ and $m\geq2$, must there exist an integer
$l\geq1$ such that $r=2^{2l}-1, m=2^{2l+1}-2,
C=U_{2l}\cup(2^{2l}-1+V_{2l})$ and
$D=V_{2l}\cup(2^{2l}-1+U_{2l})$?\vskip 2mm

In 2017, Kiss and S\'{a}ndor
\cite{Kiss} solved Problem 2 affirmatively. In 2019, Chen, Tang and Yang
\cite{Chentangyang} solved Problem 1 affirmatively under the
assumption $0\le r<m$. In 2021, Chen and Chen \cite{ChenChen2021} solved Problem 1 affirmatively.

Notice that all of these results are taking into account the union of two sets is the set of all natural numbers or complete interval. In 2012, Jiao et al. \cite{Jiao} determined all sets $C$ and $D$ with $C\cup D=[0,m]\setminus \{r\}$, where $0<r<m$ and $C\cap D=\emptyset$
such that $R_C(n)=R_D(n)$ for any nonnegative integer $n$. In this paper, we consider the case
$C\cup D=\mathbb{N}\setminus\{r+mk:k\in\mathbb{N}\}$, where $m\ge 2$, $k\ge 0$ and $C\cap D=\emptyset$.


First, we give some notations. Let
$$H_0(h_1,h_2,\ldots)=\Big\{\sum_{i}\varepsilon_ih_i:\varepsilon_i\in\{0,1\},\sum_{i}\varepsilon_ih_i<+\infty,2\mid \sum_{i}\varepsilon_i\Big\}$$
and
$$H_1(h_1,h_2,\ldots)=\Big\{\sum_{i}\varepsilon_ih_i:\varepsilon_i\in\{0,1\},\sum_{i}\varepsilon_ih_i<+\infty,2\nmid \sum_{i}\varepsilon_i\Big\},$$
where $h_i\in \mathbb{Z}^{+}$.

For a nonnegative integer $l$, let
$$S_1(l)=H_0(h_1,h_2,\ldots),~~~T_1(l)= H_1(h_1,h_2,\ldots),$$
 where $h_1=1$, $h_2=2$, $h_3=4,\ldots,h_l=2^{l-1},h_{l+1}=2^l+1,h_{l+2}=2(2^l+1),h_{l+3}=4(2^l+1),\ldots$, and let
$$S_2(l)=H_0(h_1,h_2,\ldots),~~~T_2(l)= H_1(h_1,h_2,\ldots),$$
 where $h_1=1$, $h_2=2$, $h_3=4,\ldots,h_l=2^{l-1}+1,h_{l+1}=2^l+1,h_{l+2}=2(2^l+1),h_{l+3}=4(2^l+1),\ldots$

In this paper, we prove the following result.

\begin{thm}\label{thm2} Let $m\geq 2$ and $r\geq 0$ be two
integers and let $A$ and $B$ be two sets with $A\cup B=\mathbb{N}\setminus\{r+mk:k\in\mathbb{N}\}$ and $A\cap B=\emptyset$. Then $R_{A}(n)=R_{B}(n)$ for every positive integer $n$ if and only if
$A=S_1(l),~B=T_1(l)$
or
$A=S_2(l),~B=T_2(l)$
or
$A=S_1(l)+1,~B=T_1(l)+1$,
where $l$ is a nonnegative integer.
\end{thm}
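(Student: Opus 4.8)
The plan is to translate everything into generating functions. Writing $A(x)=\sum_{a\in A}x^{a}$, $B(x)=\sum_{b\in B}x^{b}$ as formal power series and using $\sum_{n\ge0}R_{S}(n)x^{n}=\tfrac12\bigl(S(x)^{2}-S(x^{2})\bigr)$ (together with $R_{S}(0)=0$), the condition ``$R_{A}(n)=R_{B}(n)$ for all $n\ge1$'' becomes $A(x)^{2}-A(x^{2})=B(x)^{2}-B(x^{2})$, that is
$$\bigl(A(x)-B(x)\bigr)\bigl(A(x)+B(x)\bigr)=A(x^{2})-B(x^{2}).$$
Set $C(x)=A(x)-B(x)$. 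From $A\cup B=\mathbb N\setminus\{r+mk:k\in\mathbb N\}$ and $A\cap B=\emptyset$ we get $A(x)+B(x)=\frac1{1-x}-\frac{x^{r}}{1-x^{m}}=:G(x)$, while $C(x)$ has coefficients $c_{n}\in\{-1,0,1\}$ with $c_{n}=0$ exactly when $n\in\{r+mk\}$, and $A,B$ are recovered from the signs of $C$. Thus Theorem~\ref{thm2} is equivalent to classifying the power series $C$ of this shape that satisfy the functional equation $C(x)G(x)=C(x^{2})$.

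For the ``if'' direction one only checks that in each of the three cases the representation $\sum_i\varepsilon_ih_i$ of an element of $H_0(h_1,h_2,\dots)\cup H_1(h_1,h_2,\dots)$ is \emph{unique}: the distinct partial sums of the listed $h_i$ fill out $\mathbb N\setminus\{r+mk\}$ with $(r,m)$ equal, respectively, to $(2^{l},2^{l}+1)$, $(2^{l-1},2^{l}+1)$, $(0,2^{l}+1)$ (a short direct computation). Uniqueness then gives $A(x)+B(x)=\prod_i(1+x^{h_i})$ and $A(x)-B(x)=\prod_i(1-x^{h_i})$, so $C(x)G(x)=\prod_i(1-x^{2h_i})=C(x^{2})$, and the remaining requirements $A\cap B=\emptyset$ and $A\cup B=\mathbb N\setminus\{r+mk\}$ hold by construction.

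For the ``only if'' direction I would first eliminate $r=0$ by translation. If $r=0$ then $0\notin A\cup B$, so $A,B\subseteq\mathbb Z^{+}$, and since $R_{A-1}(n)=R_{A}(n+2)$, $R_{B-1}(n)=R_{B}(n+2)$, the pair $(A-1,B-1)$ again satisfies the hypotheses but with $r$ replaced by $m-1\ge1$; a solution with $r=m-1$ must be of the first type (the second type has $r=(m-1)/2\ne m-1$), hence $(A-1,B-1)=(S_1(l),T_1(l))$ with $m-1=2^{l}$, and translating back yields precisely the shifted family $S_1(l)+1,\,T_1(l)+1$. So assume $r\ge1$; then $0\in A\cup B$, and by the symmetry of the hypothesis in $A,B$ we may assume $0\in A$, i.e.\ $C(0)=1$. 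Multiplying $C(x)G(x)=C(x^{2})$ by $(1-x)(1-x^{m})$ and cancelling the factor $1-x$ (which divides $1-x^{m}-x^{r}+x^{r+1}$) converts the equation into
$$C(x)\,M(x)=(1-x^{m})\,C(x^{2}),\qquad M(x):=\frac{1-x^{m}-x^{r}+x^{r+1}}{1-x}=E_m(x)-x^{r},$$
where $E_m(x):=1+x+\dots+x^{m-1}$ and $M(0)=1$ (for $r<m$ one has $M(x)=\sum_{0\le i<m,\ i\ne r}x^{i}$).

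The heart of the proof, and the step I expect to be the main obstacle, is to extract $r$, $m$ and the full sequence $C$ from this equation. Iterating gives the formal identity $C(x)=\prod_{j\ge0}\frac{1-x^{m2^{j}}}{M(x^{2^{j}})}$; and since $\prod_{j\ge0}(1-x^{m2^{j}})=\sum_{n\ge0}(-1)^{s_2(n)}x^{mn}$ (the Thue--Morse series $T(y)=\prod_{k\ge0}(1-y^{2^{k}})=\sum_{n}(-1)^{s_2(n)}y^{n}$, with $s_2$ the binary digit sum, evaluated at $y=x^{m}$), this presents $C$ as a Thue--Morse-type series times $\prod_{j\ge0}\frac{1}{M(x^{2^{j}})}$. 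Using $T(x)=(1-x)T(x^{2})$, hence $T(x)E_m(x)=(1-x^{m})T(x^{2})$, and comparing $C$ with $T$, one finds by induction that $c_n=(-1)^{s_2(n)}$ for all $n<r$, whence the constraint $c_r=0$ forces $s_2(r)$ to be odd. Further coefficient comparisons in the recursion $C(x)M(x)=(1-x^{m})C(x^{2})$ — the delicate bookkeeping — must then force, successively, $r<m$, then $m=2^{l}+1$ for some $l\ge0$, and then $r\in\{2^{l},2^{l-1}\}$ (the value $2^{l-1}$ only when $l\ge1$), so that $M(x)$ equals $\frac{1-x^{2^{l}}}{1-x}$ or $\frac{(1-x^{2^{l-1}})(1+x^{2^{l-1}+1})}{1-x}$; plugging this back into the infinite product and using $\prod_{k\ge0}(1+y^{2^{k}})=(1-y)^{-1}$ collapses $C(x)$ to $\prod_{j=0}^{l-1}(1-x^{2^{j}})\prod_{k\ge0}(1-x^{2^{k}(2^{l}+1)})$, respectively to the analogous product for $S_2(l)$ — that is, to $\prod_i(1-x^{h_i})$ — which identifies $(A,B)$ as $(S_1(l),T_1(l))$ or $(S_2(l),T_2(l))$. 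The real work is to rule out every $(r,m)$ outside this short list; the hardest case is $r\ge m$, where $C$ begins exactly as the Thue--Morse sequence and the contradiction emerges only further out, once $C$ is forced to exhibit its prescribed zeros.
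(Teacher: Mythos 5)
Your generating-function reformulation is sound and is a genuinely different framework from the paper's: with $C(x)=A(x)-B(x)$ and $G(x)=A(x)+B(x)=\frac{1}{1-x}-\frac{x^{r}}{1-x^{m}}$, the hypothesis is indeed equivalent to $C(x)G(x)=C(x^{2})$ with $c_{n}\in\{-1,0,1\}$ vanishing exactly on $\{r+mk\}$, and your ``if'' direction (unique subset-sum representations, so $A(x)\pm B(x)=\prod_i(1\pm x^{h_i})$, hence $C(x)G(x)=\prod_i(1-x^{2h_i})=C(x^{2})$) is essentially a complete proof; it in effect reproves the Kiss--S\'andor result that the paper simply cites as Lemma 2.6. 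Your first steps in the converse are also correct: once $(r,m)$ is fixed, the recursion determines $C$ uniquely from $c_{0}=1$ (this is the analogue of the paper's uniqueness Lemma 2.1), comparison with the Thue--Morse product gives $c_{n}=(-1)^{s_2(n)}$ for $n<r$, and $c_{r}=0$ forces $s_2(r)$ odd (the paper's Lemma 3.1(i)), and the reduction of $r=0$ by translation matches what the paper does inside Lemma 2.11.

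The genuine gap is exactly the step you yourself flag as ``the main obstacle'': nothing in the proposal shows that the equation $C(x)M(x)=(1-x^{m})C(x^{2})$, together with the prescribed zero set of $C$, forces $r<m$, then $m=2^{l}+1$, then $r\in\{0,2^{l-1},2^{l}\}$. The sentence ``further coefficient comparisons \ldots\ must then force'' asserts the hardest content of the theorem without an argument, and no mechanism is offered for ruling out an arbitrary pair $(r,m)$ --- in particular the regime $m\le r-1$, where $C$ agrees with the Thue--Morse signs on a long initial segment and a contradiction only emerges from the interplay of the zero pattern of $C$ with the binary structure of $r$ and $m$. This is precisely where the paper expends almost all of its effort: the comparison sets $E_u,F_u$ and the recurrences of Lemmas 2.3--2.5, the elimination arguments of Lemmas 2.7--2.11 (which settle $r\le m$, yielding $r=0$, $m=2^{v}+1$ or $r=2^{u}$, $m=2^{u+\varepsilon}+1$), and the lengthy binary-digit case analysis of Section 3 (Lemmas 3.1--3.4 and the proof of Theorem 1.1) that excludes $m\le r-1$. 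Until you supply an argument of comparable substance --- whether by your coefficient recursion or otherwise --- the necessity direction, i.e.\ the theorem itself, is not established; as it stands the proposal is a correct equivalent reformulation plus a proof of sufficiency only.
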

\vskip 2mm

Throughout this paper, the characteristic function of the set $C$ is denoted by
\begin{eqnarray*}
\chi_{C}(t)=
\begin{cases}
0          &\mbox{ $t\not\in C$},\\
1&\mbox{ $t\in C$}.\\
\end{cases}
\end{eqnarray*}

Let $C(x)$ be the set of integers in $C$ which are less than or
equal to $x$. For $n\in \mathbb{N}$, let $R_{A,B}(n)$  denote the
number of the equation $n=a+b$, $a\in A, b\in B$. For any positive
integer $n$, let $D_2(n)$ denote the number of $1$ in the binary
representation of $n$. Let $D_2(0)=0$. It is clear that
$D_2(n+2^k)=1+D_2(n)$ if $0\le n<2^k$.

\section{Preliminary Lemmas}

\begin{lem}\label{999999} (See \cite[Claim 1]{Kiss}.) Let $0<r_1<\ldots<r_s\leq m$ be integers($s=0$ is allowed). Then there exists at most one pair of sets $(C,D)$ such that $C\cup D=[0,m]\setminus\{r_1,\ldots,r_s\}$, $C\cap D=\emptyset$, $0\in C$, and $R_C(k)=R_D(k)$ for every integer $k\leq m$.
\end{lem}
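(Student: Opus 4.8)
\textbf{Proof plan for Lemma \ref{999999}.}

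The statement is a uniqueness result, so the natural strategy is to assume two pairs $(C,D)$ and $(C',D')$ satisfy all the hypotheses and show $C=C'$ (hence $D=D'$). The key idea is to reconstruct the characteristic function $\chi_C$ on $[0,m]$ inductively, one integer at a time, using the constraints $R_C(k)=R_D(k)$ for $k\le m$ together with the fact that $C$ and $D$ form a partition of the fixed set $[0,m]\setminus\{r_1,\ldots,r_s\}$. First I would set up the induction: we know $0\in C$ and $0\notin D$ (since $C\cap D=\emptyset$ and $0\notin\{r_1,\ldots,r_s\}$ because $r_1>0$), so $\chi_C(0)=1$, $\chi_D(0)=0$ are forced. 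Also for each $r_i$ we have $\chi_C(r_i)=\chi_D(r_i)=0$, forced. The remaining values of $\chi_C$ on $[0,m]$ are exactly one of $0$ or $1$, complementary to $\chi_D$; I claim they are determined recursively.

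The engine of the induction is the following observation about $R_C(n)$. For $1\le n\le m$, write
\begin{equation*}
R_C(n)=\sum_{\substack{a+b=n\\ a<b}}\chi_C(a)\chi_C(b)
=\frac12\Bigl(\sum_{a+b=n}\chi_C(a)\chi_C(b)-\chi_C(n/2)\Bigr),
\end{equation*}
where the last term is present only when $n$ is even. The point is that in the sum $\sum_{a+b=n}\chi_C(a)\chi_C(b)$ over ordered pairs with $0\le a,b\le n$, the only terms involving an index outside the range $[1,n-1]$ are the two boundary terms $\chi_C(0)\chi_C(n)+\chi_C(n)\chi_C(0)=2\chi_C(n)$ (using $\chi_C(0)=1$). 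Writing the analogous identity for $D$ and subtracting, and using $R_C(n)=R_D(n)$, one gets
\begin{equation*}
2\bigl(\chi_C(n)-\chi_D(n)\bigr)
= \Bigl(\text{expression in }\chi_C(j),\chi_D(j)\text{ for }1\le j\le n-1\Bigr),
\end{equation*}
where the correction terms $\chi_C(n/2)$ and $\chi_D(n/2)$ for even $n$ also only involve an index $<n$. Since $\chi_D(n)=1-\chi_C(n)$ when $n\notin\{r_1,\ldots,r_s\}$, the left side becomes $2(2\chi_C(n)-1)=\pm2$, and its sign — hence the value $\chi_C(n)\in\{0,1\}$ — is completely pinned down by the already-known values $\chi_C(1),\dots,\chi_C(n-1)$. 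When $n\in\{r_1,\ldots,r_s\}$ the value is forced to $0$ with no computation needed. This gives the inductive step: $\chi_C$ restricted to $[0,m]$ is uniquely determined, so at most one pair $(C,D)$ can exist.

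The main obstacle, and the place where care is needed, is bookkeeping the boundary and diagonal terms correctly: I must verify that no index outside $[1,n-1]$ other than the already-handled $\chi_C(0)$, $\chi_C(n)$ (and $\chi_C(n/2)$ for even $n$) appears, so that the recursion genuinely closes; and I must confirm that the coefficient of $\chi_C(n)-\chi_D(n)$ is nonzero (it is $2$) so the value is actually determined rather than merely constrained. The degenerate cases $s=0$ and small $m$, and the parity split for even versus odd $n$, should be checked but are routine. No existence claim is needed here — Lemma \ref{999999} asserts only ``at most one'' — so the entire proof is this determinacy argument.
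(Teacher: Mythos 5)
Your approach is correct and is essentially the standard one: the paper offers no proof of this lemma (it only cites \cite[Claim 1]{Kiss}), and the argument there is exactly your inductive determinacy of $\chi_C(n)$ from the relation $R_C(n)=R_D(n)$ together with the partition of $[0,m]\setminus\{r_1,\ldots,r_s\}$. One computational slip to fix: since $0\in C$ and $C\cap D=\emptyset$ force $\chi_D(0)=0$, the boundary contribution to $\sum_{a+b=n}\chi_D(a)\chi_D(b)$ is $2\chi_D(0)\chi_D(n)=0$, not $2\chi_D(n)$, so after subtracting the two identities the relation that actually follows from the hypotheses is $2\chi_C(n)=\bigl(\text{expression in }\chi_C(j),\chi_D(j),\ 1\le j\le n-1\bigr)$ rather than $2(\chi_C(n)-\chi_D(n))=\cdots$. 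This still pins down $\chi_C(n)\in\{0,1\}$ uniquely (the left side is $0$ or $2$), so the uniqueness conclusion is unaffected, but the equation as you wrote it is not itself a consequence of the hypotheses and should be corrected before the step is used.
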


\begin{lem}\label{lem999999}(See \cite[Theorem 3]{Kiss}.) Let $C$ and $D$ be sets of nonnegative integers such that $C\cup D=[0,m]$, $C\cap D=\emptyset$ and $0\in C$. Then $R_C(n)=R_D(n)$ for every positive integer $n$ if and only if there exists an natural number $l$ such that $C=U_{l}$ and $D=V_{l}$.
\end{lem}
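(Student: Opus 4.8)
The plan is to work throughout with generating functions: for $C\subseteq\mathbb N$ put $C(x)=\sum_{c\in C}x^{c}$, and recall that $R_{C}(n)=R_{D}(n)$ for all $n\ge1$ is equivalent to the formal power-series identity
\[
\bigl(C(x)-D(x)\bigr)\bigl(C(x)+D(x)\bigr)=C(x^{2})-D(x^{2}).\qquad(\star)
\]
For the ``if'' direction I first observe that for the exponents $h_{i}$ used in $S_{1}(l),T_{1}(l)$ --- namely $1,2,\dots,2^{l-1},2^{l}+1,2(2^{l}+1),\dots$ --- and those used in $S_{2}(l),T_{2}(l)$, every integer has at most one representation $\sum_{i}\varepsilon_{i}h_{i}$ with $\varepsilon_{i}\in\{0,1\}$; this is read off from $n=a+(2^{l}+1)t$ with $0\le a\le 2^{l}-1$ and the binary digits of $a$ and $t$. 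Hence $S_{j}(l)(x)-T_{j}(l)(x)=\prod_{i\ge1}(1-x^{h_{i}})$ and $S_{j}(l)(x)+T_{j}(l)(x)=\prod_{i\ge1}(1+x^{h_{i}})$, so the product of the two is $\prod_{i\ge1}(1-x^{2h_{i}})=S_{j}(l)(x^{2})-T_{j}(l)(x^{2})$, which is $(\star)$; the pair $S_{1}(l)+1,T_{1}(l)+1$ follows by $a\mapsto a+1$. A direct check confirms that $S_{j}(l)\cup T_{j}(l)$ (or its translate) is exactly $\mathbb N\setminus\{r+mk\}$ with $m=2^{l}+1$ and $r=2^{l}$, $r=2^{l-1}$, or $r=0$ respectively.

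For the ``only if'' direction, suppose $R_{A}(n)=R_{B}(n)$ for all $n\ge1$. If $r=0$ then $0\notin A\cup B$, so $A,B\subseteq\mathbb Z^{+}$ and the translated pair $(A-1,B-1)$ partitions $\mathbb N\setminus\{(m-1)+mk\}$ and still has equal representation functions; so I may assume $r\ge1$ and, after swapping $A$ and $B$, that $0\in A$. Writing $f=A(x)-B(x)$ and $g=A(x)+B(x)=\sum_{n\notin\{r+mk\}}x^{n}=\frac{1}{1-x}-\frac{x^{r}}{1-x^{m}}$, identity $(\star)$ becomes the functional equation $f(x)g(x)=f(x^{2})$. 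Iterating, $f(x^{2^{k}})=f(x)\prod_{j=0}^{k-1}g(x^{2^{j}})$, and since $f(0)=g(0)=1$, letting $k\to\infty$ coefficientwise gives $f(x)=\bigl(\prod_{j\ge0}g(x^{2^{j}})\bigr)^{-1}$. Thus $f$, and hence the pair $(A,B)$, is \emph{uniquely determined} by $(m,r)$. Separately, isolating the pair $(0,n)$ in the equality $R_{A}(n)=R_{B}(n)$ expresses $\chi_{A}(n)$ in terms of $\chi_{A}(0),\dots,\chi_{A}(n-1)$; running this recursion while $n$ stays below the first excluded point forces $A\cap[0,r-1]=U\cap[0,r-1]$ and $B\cap[0,r-1]=V\cap[0,r-1]$.

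It remains to decide which pairs $(m,r)$ are admissible and to identify the corresponding $(A,B)$. I would continue the $(0,n)$-recursion past $n=r$, using the Dombi--Lev identity $R_{U}(n)=R_{V}(n)$ (which follows from $U(x)-V(x)=\prod_{i\ge0}(1-x^{2^{i}})$) and Lemmas~\ref{999999} and~\ref{lem999999} applied to the finite restrictions $A\cap[0,N]$, $B\cap[0,N]$, to extract successive digit constraints on $r$ and on $m$ (for instance $D_{2}(r)$ must be odd); combined with an argument excluding $r\ge m$, these force $m-1$ to be a power of $2$, say $m=2^{l}+1$, and $r\in\{2^{l-1},2^{l}\}$ (the value $r=0$ having been converted to $r=m-1=2^{l}$). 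For each such $(m,r)$ the sufficiency computation shows that $S_{1}(l)$ (resp.\ $S_{2}(l)$; resp.\ $S_{1}(l)+1$ in the $r=0$ case) together with its complement in $\mathbb N\setminus\{r+mk\}$ satisfies the very same equation $f(x)g(x)=f(x^{2})$ with $0$ in the first set, so by the uniqueness above $A$ must equal that set and $B$ its complement, which is exactly the claimed conclusion.

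The main obstacle is this last combinatorial step: the functional equation has to be exploited simultaneously on the finite restrictions --- where Lemmas~\ref{999999} and~\ref{lem999999} operate --- and on the global power series, and tracking how the infinitely many excluded points $r+mk$ disturb the Thue--Morse pattern, without the clean interval structure of Lemma~\ref{lem999999}, is where the real work lies.
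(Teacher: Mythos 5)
Your proposal does not address the statement in question. Lemma~\ref{lem999999} is about partitions of a \emph{complete finite interval}: $C\cup D=[0,m]$, $C\cap D=\emptyset$, $0\in C$, and the claim is that $R_C(n)=R_D(n)$ for all $n\ge1$ holds exactly when $m=2^l-1$, $C=U_l$ and $D=V_l$. In the paper this is a quoted external result (Theorem~3 of Kiss and S\'andor); no proof is given beyond the citation. What you have written is instead a sketch of the paper's main theorem (Theorem~\ref{thm2}), concerning partitions of $\mathbb N\setminus\{r+mk:k\in\mathbb N\}$ into $S_i(l)$ and $T_i(l)$ --- and indeed you invoke Lemma~\ref{lem999999} itself as a tool inside your argument, which would be circular if this were intended as a proof of that lemma.

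Nothing in your text establishes the actual content of the lemma. The ``if'' direction for $U_l,V_l$ could be salvaged from your generating-function identity $(\star)$ by taking $h_i=2^{i-1}$ for $1\le i\le l$, since $U_l(x)-V_l(x)=\prod_{i=0}^{l-1}(1-x^{2^i})$ and $U_l(x)+V_l(x)=\prod_{i=0}^{l-1}(1+x^{2^i})$, whose product is $U_l(x^2)-V_l(x^2)$. But the ``only if'' direction --- that a disjoint covering of $[0,m]$ with equal representation functions forces $m+1$ to be a power of two and $C$ to be exactly the truncated Thue--Morse set $U_l$ --- is the substantive part of the Kiss--S\'andor theorem and is nowhere argued; your uniqueness-via-$f(x)g(x)=f(x^2)$ device does not apply as stated, because for a finite interval the hypothesis constrains only $C\cup D$ on $[0,m]$, not a full partition of $\mathbb N$. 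Even read as a proof of Theorem~\ref{thm2}, your sketch concedes the decisive combinatorial step (ruling out all pairs $(m,r)$ outside the three admissible families) as ``where the real work lies,'' which is precisely the content occupying Lemmas~\ref{chen55555}--\ref{55555} and Section~3 of the paper.
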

\begin{lem}\label{lem2n} Let $K$ and $L$ be two positive integers with $L\le K\leq2L$ and  let $A, B, C, D, T$ be five sets
such that $L\in T$, $L\not\in C$,  $0\in A\cap C$, $0\notin B\cup D$ and $$A\cup
B=\mathbb{N}\setminus T,\quad  A\cap B=\emptyset,$$ $$  C(K)\cup D(K)=[0,K]\setminus T(L-1),\quad C(K)\cap D(K)=\emptyset,$$
$$A(L-1)=C(L-1),\quad B(L-1)=D(L-1).$$
Then for any integer $n$ and integer $N$ with $L\leq n\leq N\leq K$, we have
\begin{eqnarray*}&&R_{A(n)}(N)+R_{D(n)}(N)-R_{B(n)}(N)-R_{C(n)}(N)\nonumber\\
&=&|D(n)\setminus (B(n)\cup T(L, n))|-R_{T,D(n)\setminus (B(n)\cup T(L, n))}(N)+R_{D,
T(L,n)\cap D }(N)\nonumber\\
&-&|C(n)\setminus (A(n)\cup T(L, n))|+R_{T,C(n)\setminus (A(n)\cup T(L, n))}(N) -R_{C,
T(L,n)\cap C }(N)-\varepsilon,
\end{eqnarray*}
where $T(L, n)=T\cap [L, n]$ and $\varepsilon=1$ if $N=2L$, otherwise $\varepsilon=0$.
\end{lem}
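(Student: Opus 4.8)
\emph{Proof plan.} The plan is to classify each representation $N=u+v$ by the positions of the two summands relative to $L$, cancel the part with both summands in $[0,L-1]$ (using $A(L-1)=C(L-1)$, $B(L-1)=D(L-1)$), and organize what remains by refining the two given partitions of $[L,n]$. Set $E:=A(L-1)=C(L-1)$, $F:=B(L-1)=D(L-1)$, $G:=E\cup F=[0,L-1]\setminus T(L-1)$, and $A':=A\cap[L,n]$, $B':=B\cap[L,n]$, $C':=C\cap[L,n]$, $D':=D\cap[L,n]$. From $A\cup B=\mathbb{N}\setminus T$ we get the disjoint union $A'\cup B'\cup T(L,n)=[L,n]$, and from $C(K)\cup D(K)=[0,K]\setminus T(L-1)$ together with $n\le K$ the disjoint union $C'\cup D'=[L,n]$. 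The observation that drives everything is $N\le K\le 2L$: two distinct elements of $[L,n]$ sum to at least $2L+1>N$, so $R_{A'}(N)=R_{B'}(N)=R_{C'}(N)=R_{D'}(N)=0$. Splitting a representation of $N$ according to $A(n)=E\cup A'$ (the ``high $\times$ high'' part vanishes, and in a ``low $\times$ high'' pair the low summand is automatically the smaller), we obtain $R_{A(n)}(N)=R_E(N)+R_{E,A'}(N)$, and similarly for $B,C,D$; the $R_E(N),R_F(N)$ terms cancel, leaving
\[
R_{A(n)}(N)+R_{D(n)}(N)-R_{B(n)}(N)-R_{C(n)}(N)=\bigl(R_{E,A'}(N)-R_{E,C'}(N)\bigr)+\bigl(R_{F,D'}(N)-R_{F,B'}(N)\bigr).
\]

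Refine the two partitions of $[L,n]$ simultaneously: let $P=A'\cap C'$, $Q=A'\cap D'$, $W=B'\cap C'$, $Z=B'\cap D'$, $X=T(L,n)\cap C'$, $Y=T(L,n)\cap D'$, so that $A'=P\cup Q$, $B'=W\cup Z$, $C'=P\cup W\cup X$, $D'=Q\cup Z\cup Y$, $T(L,n)=X\cup Y$ (all disjoint). Then $\chi_{A'}-\chi_{C'}=\chi_Q-\chi_W-\chi_X$ and $\chi_{D'}-\chi_{B'}=\chi_Q-\chi_W+\chi_Y$, so the right side of the previous display equals $R_{G,Q}(N)-R_{G,W}(N)-R_{E,X}(N)+R_{F,Y}(N)$; and the same decompositions identify $D(n)\setminus(B(n)\cup T(L,n))=Q$, $C(n)\setminus(A(n)\cup T(L,n))=W$, $T(L,n)\cap D=Y$, $T(L,n)\cap C=X$. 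Hence the lemma is equivalent to
\[
R_{G,Q}(N)-R_{G,W}(N)-R_{E,X}(N)+R_{F,Y}(N)=|Q|-|W|-R_{T,Q}(N)+R_{T,W}(N)-R_{C,X}(N)+R_{D,Y}(N)-\varepsilon,
\]
which I would verify termwise, using $N\le 2L$ a second time. For $q\in Q$: since $q\in A'\subseteq\mathbb{N}\setminus T$ we have $q\neq L$, so $q\ge L+1$ and $N-q\in[0,L-1]$; as $[0,L-1]$ is partitioned into $G$ and $T\cap[0,L-1]$ we get $\chi_G(N-q)+\chi_T(N-q)=1$, hence on summing $R_{G,Q}(N)+R_{T,Q}(N)=|Q|$ (and likewise $R_{G,W}(N)+R_{T,W}(N)=|W|$, using $W\subseteq B'\subseteq\mathbb{N}\setminus T$). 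For $x\in X$: since $x\in C'$ and $L\notin C$ we have $x\ge L+1$, so $N-x\in[0,L-1]$ and $\chi_C(N-x)=\chi_E(N-x)$, whence $R_{C,X}(N)=R_{E,X}(N)$. For $y\in Y$: here $y\ge L$, and indeed $L\in Y$ since $L\in T$ and $L\in D'$ (the latter because $L\notin C$); when $y\ge L+1$, or $y=L$ and $N<2L$, we get $N-y\in[0,L-1]$ and $\chi_D(N-y)=\chi_F(N-y)$, while the single remaining term $N=2L$, $y=L$ contributes $\chi_D(L)-\chi_F(L)=1$; hence $R_{D,Y}(N)-R_{F,Y}(N)=\varepsilon$. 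Plugging these three evaluations into the left-hand side of the displayed identity recovers its right-hand side.

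\emph{Main obstacle.} There is no analytic content: once the vanishing $R_{A'}(N)=R_{B'}(N)=R_{C'}(N)=R_{D'}(N)=0$ is noticed, the argument reduces to linear bookkeeping with characteristic functions. The only genuinely delicate point is the single index $L$: it lies in $T$ and in $D$ but not in $C$, which makes it the unique place where the infinite sets $T$, $C$, $D$ on the right-hand side cannot be replaced by their truncations to $[0,L-1]$, and it is exactly what forces the correction $\varepsilon$ at $N=2L$ (in effect, $\varepsilon$ cancels the diagonal ``pair'' $L+L$ that $R_{D,Y}(2L)$ counts but $R_{D(n)}(2L)$ does not). The care required is to verify that $Q$, $W$, $X$ all avoid $L$ — each meets one of $A'$, $B'$, $C$, whereas $L\in T\setminus C$ — while $L\in Y$.
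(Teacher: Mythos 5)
Your proposal is correct and follows essentially the same route as the paper's proof: both exploit $N\le 2L$ to force the smaller summand of every representation below $L$, then use $A(L-1)=C(L-1)$, $B(L-1)=D(L-1)$ and the complementarity $C(K)\cup D(K)=[0,K]\setminus T(L-1)$, with the diagonal pair $L+L$ producing the correction $\varepsilon$. The only difference is bookkeeping: you cancel the common low parts first and refine $[L,n]$ into atoms $P,Q,W,Z,X,Y$, whereas the paper groups $R_{A(n)}+R_{D(n)}$ against $R_{B(n)}+R_{C(n)}$ and records the same content as the set identity $A(n)\setminus C(n)=D(n)\setminus(B(n)\cup T(L,n))$ (your $Q$) — the underlying argument is identical.
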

\begin{proof}
For any integer $n$ and integer $N$ with $L\leq n\leq N\leq K$,
if $N=a+b$ with $a,b\in A(n)$ and $a<b$, then by $2a<a+b=N\le K\le
2L$, we have $a<L$. It follows from $A(L-1)=C(L-1)$ and $a\in A(n)$
that $a\in C$ and so $a\in (C\cap A)(n)$. If $b\in C$, then $b\in
(C\cap A)(n)$. If $b\notin C$, then $b\in A(n)\setminus C(n)$. It
is clear that $$(C\cap A)(n) \cap (A(n)\setminus
C(n))=\emptyset,\quad (C\cap A)(n) \cup (A(n)\setminus
C(n))\subseteq A(n).$$ Thus,
\begin{eqnarray*}R_{A(n)}(N)= R_{(C\cap A)(n)
}(N)+R_{(C\cap A)(n), A(n)\setminus C(n) }(N).\end{eqnarray*} For
$a\in A(n)\setminus C(n)$, by $A(L-1)=C(L-1)$ and $L\not\in A$, we have $a>L$. Thus,
$N-a<2L-L=L\le n$. Again, by $A(L-1)=C(L-1)$ we have
$$N-a\in A\Leftrightarrow N-a\in C \Leftrightarrow N-a\in (C\cap
A)(n).$$ It follows that
$$R_{(C\cap A)(n), A(n)\setminus C(n) }(N)=R_{A, A(n)\setminus C(n) }(N)=R_{C, A(n)\setminus C(n)
}(N).$$ Hence
\begin{equation}\label{eqe2a}R_{A(n)}(N)=R_{(C\cap A)(n) }(N)+R_{C, A(n)\setminus
C(n) }(N) \end{equation}and
\begin{equation*}R_{A(n)}(N)=R_{(C\cap A)(n) }(N)+R_{A, A(n)\setminus
C(n) }(N) . \end{equation*} Similarly,
\begin{eqnarray}\label{eqe5}&&R_{D(n)}(N)\nonumber=R_{(D\cap B)(n) }(N)+R_{D(L-1), D(n)\setminus B(n)}(N)\nonumber\\
&=&R_{(D\cap B)(n) }(N)+R_{D, D(n)\setminus B(n)}(N)-\varepsilon\nonumber\\
&=&R_{(D\cap B)(n) }(N)+R_{D, D(n)\setminus (B(n)\cup(T(L,n))
}(N)+R_{D, T(L,n)\cap D(n) }(N)-\varepsilon,\end{eqnarray}
where $\varepsilon=1$ if $N=2L$, otherwise $\varepsilon=0$.

Now we prove that
\begin{equation}\label{eqe2}A(n)\setminus
C(n)=D(n)\setminus (B(n)\cup T(L, n)).\end{equation} In
fact, if $t\in A(n)\setminus C(n)$, then $t\in A(n)$, $t\notin
C(n)$, $t> L$ and $t\notin T(L,n)$. By $C(n)\cup D(n)=[0,n]\setminus T(L-1)$, we have $t\in D(n)$.
By $A\cap B=\emptyset$, we have $t\notin B(n)$.
 So $t\in D(n)\setminus (B(n)\cup T(L, n))$.  On the other hand, if $t\in D(n)\setminus (B(n)\cup T(L, n))$, then $t\in D(n)$, $t\notin B(n)$, $t\notin T(L,n)$. Noting that $D(L-1)=B(L-1)$ and $L\in T$, we have $t> L$ and $t\notin T(n)$. By $C(n)\cap D(n)=\emptyset$, we have $t\notin C(n)$.
By $A(n)\cup B(n)=[0,n]\setminus T(n)$ and $A\cap B=\emptyset$, we have $t\in A(n)$.
 So $t\in A(n)\setminus C(n)$.

Therefore,
\eqref{eqe2} holds. In view of \eqref{eqe2a},
\eqref{eqe2}, we have
\begin{equation}\label{eqe2ab}R_{A(n)}(N)=R_{(C\cap A)(n) }(N)
+R_{C, D(n)\setminus (B(n)\cup T(L, n))}(N).\end{equation}
For any $t\in D(n)\setminus (B(n)\cup T(L, n))$, by $D(L-1)=B(L-1)$ and $L\in T(L, n)$,
we have $t>L$. So $N-t\in [0, L-1]$.
 It follows from  $C(K)\cup D(K)=[0,K]\setminus T(L-1)$  that
\begin{eqnarray}\label{eqe4} &&R_{C, D(n)\setminus (B(n)\cup T(L, n))
}(N)+R_{D, D(n)\setminus (B(n)\cup T(L, n))}(N)\nonumber\\
&=&|D(n)\setminus (B(n)\cup T(L, n))|-R_{T(L-1),D(n)\setminus (B(n)\cup T(L, n))}(N)\nonumber\\
&=&|D(n)\setminus (B(n)\cup T(L, n))|-R_{T,D(n)\setminus (B(n)\cup T(L, n))}(N) .
\end{eqnarray}
For the last equality, we have used fact that $N-t\le L-1$ for all
$t\in D(n)\setminus (B(n)\cup T(L, n))$. By \eqref{eqe5}, \eqref{eqe2ab} and
\eqref{eqe4}, we have
\begin{eqnarray*}&& R_{A(n)}(N)+R_{D(n)}(N)\\
&=&R_{(C\cap A)(n) }(N)+R_{(D\cap B)(n) }(N)+|D(n)\setminus (B(n)\cup T(L, n))|\\
&& -R_{T,D(n)\setminus (B(n)\cup T(L, n))}(N)+R_{D, T(L,
n)\cap D }(N)-\varepsilon.\end{eqnarray*}
where $\varepsilon=1$ if $N=2L$, otherwise $\varepsilon=0$.

Noting that $L\not\in C$, similar to the proof above, we have
\begin{eqnarray*}&&R_{B(n)}(N)+R_{C(n)}(N)\\
&=&R_{(C\cap A)(n) }(N)+R_{(D\cap B)(n) }(N)+|C(n)\setminus (A(n)\cup T(L, n))|\\
&&-R_{T,C(n)\setminus (A(n)\cup T(L, n))}(N)+R_{C, T(L,
n)\cap C }(N).\end{eqnarray*}
 Thus,
\begin{eqnarray}\label{eqn2}&&R_{A(n)}(N)+R_{D(n)}(N)-R_{B(n)}(N)-R_{C(n)}(N)\\
&=&|D(n)\setminus (B(n)\cup T(L, n))|-R_{T,D(n)\setminus (B(n)\cup T(L, n))}(N)+R_{D,
T(L,n)\cap D }(N)\nonumber\\
&-&|C(n)\setminus (A(n)\cup T(L, n))|+R_{T,C(n)\setminus (A(n)\cup T(L, n))}(N) -R_{C,
T(L,n)\cap C }(N)-\varepsilon,\nonumber
\end{eqnarray}
where $\varepsilon=1$ if $N=2L$, otherwise $\varepsilon=0$.

This completes the proof of Lemma \ref{lem2n}.

\end{proof}

\begin{lem}\label{lem2} Let $K$ and $L$ be two positive integers with $L\le K\leq 2L$ and  let $A, B, C, D, T$ be five sets
such that $L\in T$,  $0\in A\cap C$, $0\notin B\cup D$ and $$A\cup
B=\mathbb{N}\setminus T,\quad  A\cap B=\emptyset,$$ $$  C(K)\cup D(K)=[0,K]\setminus T(L-1),\quad C(K)\cap D(K)=\emptyset.$$

 If $R_{A}(n)=R_{B}(n)$ and $R_{C}(n)=R_{D}(n)$ for every positive integer $n\le K$, then, $L\notin C$, $L\in D$ and for every positive integer $n<K$, we have
\begin{eqnarray*} &&R_{T,C\setminus (A\cup T(L, n))}(n)-R_{T,D\setminus (B\cup T(L, n))}(n)+R_{D, T(L, n)\cap
D }(n)-R_{C, T(L, n)\cap C }(n)\\
&=&R_{T,C\setminus (A\cup T(L, n))}(n+1)-R_{T,D\setminus (B\cup T(L, n))}(n+1)+R_{D, T(L,
n)\cap D }(n+1)\\
&&-R_{C, T(L, n)\cap C }(n+1)+\chi_A (n+1)-\chi_{C}(n+1)-\varepsilon,
\end{eqnarray*}
where $T(L, n)=T\cap [L, n]$ and $\varepsilon=1$ if $n=2L-1$, otherwise $\varepsilon=0$.
\end{lem}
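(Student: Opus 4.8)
The plan is to reduce the statement to Lemma~\ref{lem2n}, whose hypotheses coincide with those of Lemma~\ref{lem2} except that it additionally requires $L\notin C$ and $A(L-1)=C(L-1)$, $B(L-1)=D(L-1)$; the first two steps establish exactly these, and the last step extracts the displayed identity by applying Lemma~\ref{lem2n} at two consecutive values of $N$. First, since $0\in A$ and $A\cap T=\emptyset$ (because $A\cup B=\mathbb N\setminus T$), we have $0\notin T$. Writing $T\cap[1,L-1]=\{r_1<\cdots<r_s\}$ (with $s=0$ allowed) and restricting all hypotheses to $[0,L-1]$, the fact that every representation $k=x+y$ with $x<y$ of an integer $k\le L-1$ has $y\le L-1$ gives $R_{A(L-1)}(k)=R_A(k)$, $R_{B(L-1)}(k)=R_B(k)$ and likewise for $C,D$, for all $k\le L-1$, together with $A(L-1)\cup B(L-1)=[0,L-1]\setminus\{r_1,\dots,r_s\}=C(L-1)\cup D(L-1)$ (disjoint unions) and $0\in A(L-1)\cap C(L-1)$. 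The assumptions $R_A=R_B$, $R_C=R_D$ on $[1,L-1]$ then give $R_{A(L-1)}(k)=R_{B(L-1)}(k)$ and $R_{C(L-1)}(k)=R_{D(L-1)}(k)$ for $k\le L-1$, so Lemma~\ref{999999} with $m=L-1$ forces $A(L-1)=C(L-1)$ and $B(L-1)=D(L-1)$.

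Next I would locate $L$. Since $L\le K$ and $L\notin T(L-1)$, we get $L\in C(K)\cup D(K)$, so exactly one of $L\in C$, $L\in D$ holds. Because $L\in T$ we have $L\notin A\cup B$, so every representation of $L$ by elements of $A$ (resp.\ $B$) uses only summands $\le L-1$ and hence lies in $A(L-1)=C(L-1)$ (resp.\ $B(L-1)=D(L-1)$); therefore $R_A(L)=R_{C(L-1)}(L)=R_C(L)-\chi_C(L)$ and $R_B(L)=R_{D(L-1)}(L)=R_D(L)$, the term $\chi_C(L)$ accounting for the representation $L=0+L$ (valid since $0\in C$), with no analogous correction for $D$ because $0\notin D$. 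Combining with $R_A(L)=R_B(L)$ and $R_C(L)=R_D(L)$ gives $\chi_C(L)=0$, i.e.\ $L\notin C$ and $L\in D$.

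Now all hypotheses of Lemma~\ref{lem2n} hold; fix a positive integer $n<K$. If $n<L$ then $T\cap[L,n]=\emptyset$, so $R_{D,T(L,n)\cap D}$ and $R_{C,T(L,n)\cap C}$ vanish, and since by Step~1 and $L\notin C$ the elements of $C\setminus A$ exceed $L$ while those of $D\setminus B$ are $\ge L$, and $n+1\le L$, the terms $R_{T,C\setminus A}$ and $R_{T,D\setminus B}$ also vanish at $n$ and at $n+1$ (a summand equal to $L$ would need complementary summand $0\notin T$); since moreover $\chi_A(n+1)=\chi_C(n+1)$ and $\varepsilon=0$, the identity is trivial. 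Assume $L\le n<K$ and apply Lemma~\ref{lem2n} with this $n$, once with $N=n$ and once with $N=n+1$ (legitimate as $L\le n\le n+1\le K$). For $N=n$ every representation of $n$ has both summands $\le n$, so $R_{A(n)}(n)=R_A(n)$ and similarly for $B,C,D$; hence the left-hand side of Lemma~\ref{lem2n} is $R_A(n)+R_D(n)-R_B(n)-R_C(n)=0$. For $N=n+1$, passing from $A$ to $A(n)$ (resp.\ $C$ to $C(n)$) loses exactly the representation $n+1=0+(n+1)$, present because $0\in A$ (resp.\ $0\in C$), while nothing is lost for $B,D$ since $0\notin B\cup D$; hence the left-hand side is $(R_A(n+1)-\chi_A(n+1))+R_D(n+1)-R_B(n+1)-(R_C(n+1)-\chi_C(n+1))=\chi_C(n+1)-\chi_A(n+1)$.

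Finally I would subtract the $N=n+1$ instance of Lemma~\ref{lem2n} from the $N=n$ instance: the cardinalities $|D(n)\setminus(B(n)\cup T(L,n))|$ and $|C(n)\setminus(A(n)\cup T(L,n))|$ do not depend on $N$ and cancel, leaving precisely the combination $R_{T,C(n)\setminus(A(n)\cup T(L,n))}-R_{T,D(n)\setminus(B(n)\cup T(L,n))}+R_{D,T(L,n)\cap D}-R_{C,T(L,n)\cap C}$ evaluated at $n$ minus the same evaluated at $n+1$, plus $\varepsilon(n+1)-\varepsilon(n)$, where $\varepsilon(N)=1$ iff $N=2L$; as $n<K\le 2L$ forces $\varepsilon(n)=0$ and $\varepsilon(n+1)=1$ iff $n=2L-1$, equating with $0-(\chi_C(n+1)-\chi_A(n+1))$ and rearranging gives exactly the stated identity. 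One last point: in these $R_{T,\cdot}$-terms one may replace $C(n)\setminus(A(n)\cup T(L,n))$ by $C\setminus(A\cup T(L,n))$ (and likewise $D$), since by Step~1 and $L\notin C$ every member of $C\setminus A$ exceeds $L$, so the only extra element is possibly $n+1$, which cannot be a summand in a representation of an integer $\le n+1$ alongside an element of $T$. There is no deep obstacle; the content is in Steps~1 and~2 (producing the extra hypotheses Lemma~\ref{lem2n} needs, where the asymmetry $0\in A\cap C$, $0\notin B\cup D$ is used decisively to conclude $L\in D$) and in carefully carrying the corrections $\chi_A(n+1)$, $\chi_C(n+1)$, $\varepsilon$ through the subtraction of the two instances of Lemma~\ref{lem2n}.
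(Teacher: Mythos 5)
Your proposal is correct and follows essentially the same route as the paper: Lemma~\ref{999999} on $[0,L-1]$ to get $A(L-1)=C(L-1)$, $B(L-1)=D(L-1)$, a comparison of representation counts at $L$ (using $0\in C$, $0\notin D$) to conclude $L\notin C$, $L\in D$, and then two applications of Lemma~\ref{lem2n} with $N=n$ and $N=n+1$, subtracted so that the cardinality terms cancel and the corrections $\chi_A(n+1)$, $\chi_C(n+1)$, $\varepsilon$ appear exactly as stated. Your handling of the trivial range $n<L$ and of the replacement of $C(n)\setminus(A(n)\cup T(L,n))$ by $C\setminus(A\cup T(L,n))$ matches the paper's argument.
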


\begin{proof} By the conditions, we have $0\in A\cap C$, $$A(L-1)\cup B(L-1)=[0, L-1]\setminus T(L-1)=C(L-1)\cup
D(L-1)$$ and $$A(L-1)\cap B(L-1)=\emptyset=C(L-1)\cap D(L-1).$$  Noting that
$0\notin B$, for every positive integer $n\le L-1$ we have
 $$R_{A(L-1)}(n)=R_{A}(n)=R_{B}(n)=R_{B(L-1)}(n)$$ and $$R_{C(L-1)}(n)=R_{C}(n)=R_{D}(n)=R_{D(L-1)}(n).$$   It follows from Lemma \ref{999999} that
 \begin{equation}\label{eqe1}A(L-1)=C(L-1),\quad B(L-1)=D(L-1).\end{equation}
 Since $0\in C$, it follows from \eqref{eqe1} that
\begin{eqnarray*} 0&=&R_C(L)-R_D(L)\\
&=&R_{C(L)}(L)-R_{D(L)}(L)\\
&=&R_{C(L-1)}(L)+\chi_C(L)-R_{D(L-1)}(L)\\
&=&R_{A(L-1)}(L)+\chi_C(L)-R_{B(L-1)}(L)\\
&=&R_{A(L)}(L)+\chi_C(L)-R_{B(L)}(L)\\
&=&\chi_C(L).
\end{eqnarray*}
So $L\notin C$. By $C(K)\cup D(K)=[0,K]\setminus T(L-1)$, we have
$L\in D$. For $n< L$, we have $T(L, n)=\emptyset$ since $[L,
n]=\emptyset $ and by \eqref{eqe1}, we have
$$D(n)\setminus B(n)=\emptyset,\quad C(n)\setminus A(n)=\emptyset,
\quad \chi_A (n+1)=\chi_{C}(n+1).$$ Noting that $0\in A$, we
have $0\notin T$. Thus, Lemma \ref{lem2} is true trivially for all
$n<L$.
 In the following, we assume that $L\le n<K$. Let $N$ be an integer with $n\le N\le K$.
Noting that
$$R_{A(n)}(n)=R_A (n)=R_B(n)=R_{B(n)}(n)$$ and $$
R_{D(n)}(n)=R_{D}(n)=R_{C}(n)=R_{C(n)}(n),$$ by Lemma \ref{lem2n} with $N=n$, we
have
\begin{eqnarray}\label{eqn1}&&|C(n)\setminus (A(n)\cup T(L, n))|-|D(n)\setminus (B(n)\cup T(L, n))|\nonumber\\
&=&R_{T,C(n)\setminus (A(n)\cup T(L, n))}(n)-R_{T,D(n)\setminus (B(n)\cup T(L, n))}(n)+R_{D,
T(L,n)\cap D }(n)-R_{C, T(L,n)\cap C }(n\nonumber)\\
&=&R_{T,C\setminus (A\cup T(L,n))}(n)-R_{T,D\setminus (B\cup T(L,n))}(n)+R_{D, T(L,n)\cap D
}(n)-R_{C, T(L,n)\cap C }(n) .\end{eqnarray} Noting that $0\in
A\cap C$(so $0\notin T$) and $0\notin B\cup D$, we have
$$R_{A(n)}(n+1)=R_A (n+1)-\chi_A (n+1)=R_B(n+1)-\chi_A (n+1)=R_{B(n)}(n+1)-\chi_A (n+1),$$ $$
R_{D(n)}(n+1)=R_{D}(n+1)
=R_{C}(n+1)=R_{C(n)}(n+1)+\chi_{C}(n+1),$$
\begin{eqnarray*}&&R_{T,D(n)\setminus (B(n)\cup T(L,n))}(n+1)-R_{T,C(n)\setminus
(A(n)\cup T(L,n))}(n+1)\\
&=&R_{T,D\setminus (B\cup T(L,n))}(n+1)-R_{T,C\setminus (A\cup T(L,n))}(n+1).
\end{eqnarray*}
Thus, by Lemma \ref{lem2n} with $N=n+1$, we have
\begin{eqnarray}\label{eqe8}&&|C(n)\setminus (A(n)\cup T(L, n))|-|D(n)\setminus (B(n)\cup T(L, n))|\nonumber\\
&=&R_{T,C\setminus (A\cup T(L,n))}(n+1)-R_{T,D\setminus (B\cup T(L,n))}(n+1)+R_{D, T(L,n)\cap D
}(n+1)\nonumber\\
&&-R_{C, T(L,n)\cap C }(n+1)+\chi_A (n+1)-\chi_{C}(n+1)-\varepsilon,\end{eqnarray}
where $\varepsilon=1$ if $n=2L-1$, otherwise $\varepsilon=0$.

Now Lemma \ref{lem2} follows from \eqref{eqn1} and \eqref{eqe8} immediately.
\end{proof}

\begin{lem}\label{lem3} Let $A, B$ and $T$ be three sets such that $A\cup B=\mathbb{N}\setminus T$, $A\cap B=\emptyset$, $0\in A$ and $0\notin T$.
 If $R_{A}(n)=R_{B}(n)$ for every positive integer $n$, then $\min T\notin
U$, $\min T\in V$ and for every positive integer $n<2\min
 T$,
\begin{eqnarray*} \sum_{t\in T(n)}  \chi_{U}
(n-t+1)
 =\sum_{t\in T(n)} \chi_{U} (n-t)+\chi_A
(n+1)-\chi_{U}(n+1)-\varepsilon,
\end{eqnarray*}
where $\varepsilon=1$ if $n=2L-1$, otherwise $\varepsilon=0$.
\end{lem}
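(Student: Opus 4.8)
Write $L=\min T$. I would deduce Lemma~\ref{lem3} from Lemma~\ref{lem2}, applied with $C=U$, $D=V$ and $K=2L$. With this choice all the structural hypotheses of Lemma~\ref{lem2} are immediate: $L\in T$; $0\in A\cap U$ and $0\notin B\cup V$; $U(K)\cup V(K)=[0,K]=[0,K]\setminus T(L-1)$ since $L$ is the least element of $T$; and $U(K)\cap V(K)=\emptyset$. The only hypothesis needing a word is $R_U(n)=R_V(n)$ for $n\le K$: choosing $l$ with $2^l>K$, the "if" direction of Lemma~\ref{lem999999} gives $R_{U_l}(n)=R_{V_l}(n)$ for every positive $n$, and since every representation of an $n\le K<2^l$ lies inside $[0,2^l-1]$ we get $R_U(n)=R_{U_l}(n)=R_{V_l}(n)=R_V(n)$. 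Lemma~\ref{lem2} then yields at once $L\notin U$ and $L\in V$ (the first two assertions), together with its displayed identity for every positive integer $n<2L$; it remains to identify that identity with the asserted one.

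The bridge is the auxiliary fact that $\chi_A(m)=\chi_U(m)$ for all $0\le m\le L$, i.e. $A$ and $U$ agree on $[0,L-1]$ (and $L$ belongs to neither, already known for $U$). Indeed $0\in A\cap U$; since $0\in A$, the only candidate representation of $1$ would use $0\notin B$, so $R_B(1)=0$ and hence $R_A(1)=0$, whence $1\notin A$, while $1\notin U$; and for $2\le m\le L$ I apply Lemma~\ref{lem2}'s identity with $n=m-1<L$. When $n<L$ the set $T(L,n)=T\cap[L,n]$ is empty and $T$ has no element $\le n$, so all eight representation-count terms in that identity vanish, and since $\varepsilon=0$ here it reduces to $0=\chi_A(m)-\chi_U(m)$. (The same observation settles the range $1\le n<L$ of Lemma~\ref{lem3} directly: there $T(n)=\emptyset$, so the left side is $0$ and the right side is $\chi_A(n+1)-\chi_U(n+1)-\varepsilon=0$.)

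For $L\le n<2L$ I simplify Lemma~\ref{lem2}'s identity. In each representation-count term a summand $v$ paired with an element of $T$ lies in $[0,L]$ and is outside $T$ except possibly for $v=L$ (which occurs only in the $(n+1)$-evaluation, and then only when $n=2L-1$). For $v\in[0,L-1]$ one checks $v\in U\setminus(A\cup T(L,n))\iff v\in U\cap B$ and $v\in V\setminus(B\cup T(L,n))\iff v\in V\cap A$, using that $v\notin T$ so $v\notin A\iff v\in B$; the boundary value $v=L$ contributes $0$ on both sides because $L\in T$ forces $\chi_A(L)=0$ and $L\notin U$ forces $\chi_U(L)=0$. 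Substituting $\chi_V=1-\chi_U$ and, off $T$, $\chi_B=1-\chi_A$, the $\chi_U$-convolution terms cancel within each side and the counts $|T(n)\cap V|$ cancel between the two sides, leaving
\[
\sum_{t\in T(n+1)}\chi_A(n+1-t)-\sum_{t\in T(n)}\chi_A(n-t)=\chi_T(n+1)+\chi_A(n+1)-\chi_U(n+1)-\varepsilon .
\]
Since for $L\le m\le 2L$ and $t\in T(m)$ the argument $m-t$ lies in $[0,L]$, the auxiliary fact lets each $\chi_A(m-t)$ be rewritten as $\chi_U(m-t)$; separating the term $t=n+1$ (worth $\chi_U(0)=1$ exactly when $n+1\in T$) turns the left side into $\sum_{t\in T(n)}\chi_U(n+1-t)-\sum_{t\in T(n)}\chi_U(n-t)+\chi_T(n+1)$. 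The $\chi_T(n+1)$ then cancels and we arrive at $\sum_{t\in T(n)}\chi_U(n-t+1)=\sum_{t\in T(n)}\chi_U(n-t)+\chi_A(n+1)-\chi_U(n+1)-\varepsilon$, as claimed.

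I expect the reduction in the third paragraph to be the main obstacle: one must push the four representation-count terms on each side of Lemma~\ref{lem2}'s identity through the substitutions $\chi_V=1-\chi_U$ and $\chi_B=1-\chi_A$, track the boundary contributions at $v=0$ and $v=L$, and observe that the $\chi_U$-convolutions cancel on each side so that only $\chi_A$-convolutions survive — which is precisely the point where the agreement of $A$ and $U$ below $L$, otherwise not obviously relevant, becomes exactly what is needed to convert those back into the $\chi_U$-convolutions of the statement.
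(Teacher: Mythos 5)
Your proposal is correct and takes essentially the same route as the paper: both apply Lemma~\ref{lem2} with $C=U$, $D=V$, $L=\min T$, $K=2\min T$ (the hypothesis $R_U(n)=R_V(n)$ being standard, as you note) and then simplify the resulting identity via $\chi_V=1-\chi_U$. The only real difference is bookkeeping: the paper uses the equality $A(L-1)=U(L-1)$ (equation \eqref{eqe1}, established inside the proof of Lemma~\ref{lem2}) to make the $R_{T,\cdot}$ terms vanish outright, whereas you re-derive that agreement of $A$ and $U$ below $L$ from Lemma~\ref{lem2}'s stated conclusion at $n<L$ and carry those terms through the substitution before they cancel --- a slightly longer but self-contained variant of the same computation.
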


\begin{proof} We employ Lemma \ref{lem2} with $C=U$, $D=V$, $L=\min
T$ and $K=2\min T$. By Lemma \ref{lem2}, we have $\min T\notin U$ and
$\min T\in V$. By \eqref{eqe1}, we have
$$\min T+\min (D\setminus (B\cup T(L,n)))>L+L=2L,~~ \min T+\min (C\setminus
(A\cup T(L,n)))>L+L=2L.$$ Hence, for all $n< 2\min T$,
$$R_{T,D\setminus (B\cup T(L,n))}(n)-R_{T,C\setminus
(A\cup T(L,n))}(n)=0,$$$$R_{T,D\setminus (B\cup T(L,n))}(n+1)-R_{T,C\setminus
(A\cup T(L,n))}(n+1)=0.$$
 Noting that  $T(L, n)=T\cap [L, n]=T\cap
[0, n]=T(n)$ and $ \chi_{U} (k)+ \chi_{V} (k)=1$ for all integers
$k\ge 0$, we have
\begin{eqnarray*}&& R_{V, T(L,n)\cap
V }(n)-R_{U, T(L,n)\cap U }(n)\\
 &=&  R_{V, T(n)\cap
V }(n)-R_{U, T(n)\cap U }(n)\\
 &=& \sum_{t\in T(n)} \chi_{V} (n-t)\chi_{V}
 (t)-\sum_{t\in T(n)} \chi_{U} (n-t)\chi_{U}
 (t)\\
&=& \sum_{t\in T(n)} (1-\chi_{U} (n-t))(1-\chi_{U}
 (t))-\sum_{t\in T(n)} \chi_{U} (n-t) \chi_{U}
 (t)\\
&=& \sum_{t\in T(n)} (1-(\left( \chi_{U} (n-t)+\chi_{U}
 (t)\right)).
\end{eqnarray*}
Similarly, we have $$R_{V, T(L,n)\cap V }(n+1)-R_{U, T(L,n)\cap U
}(n+1)=\sum_{t\in T(n)}(1-(\left( \chi_{U} (n-t+1)+\chi_{U}
 (t)\right)).$$
It follows from Lemma \ref{lem2}   that
\begin{eqnarray*}&&\sum_{t\in T(n)}\chi_{U} (n-t+1)\\
 &=&\sum_{t\in T(n)}\chi_{U} (n-t)+\chi_A (n+1)-\chi_{U}(n+1)-\varepsilon,
\end{eqnarray*}
where $\varepsilon=1$ if $n=2L-1$, otherwise $\varepsilon=0$.

Now Lemma \ref{lem3} follows immediately.
\end{proof}

\begin{lem}\label{lemy1} (See \cite[Theorem 3]{Kiss1}.)  Let
$A=H_0(h_1,h_2,\ldots)$, $B=H_1(h_1,h_2,\ldots),$
where $h_i\in \mathbb{Z}^{+}$.
Then $R_{A}(n)=R_{B}(n)$ for every positive integer $n$.
\end{lem}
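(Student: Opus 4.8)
The plan is to prove $R_A(n)=R_B(n)$ for all positive integers $n$ by a generating-function argument, exploiting the fact that $A=H_0(h_1,h_2,\ldots)$ and $B=H_1(h_1,h_2,\ldots)$ are precisely the ``even-parity'' and ``odd-parity'' subset-sum sets associated with the sequence $(h_i)$. The key observation is that $R_A(n)-R_B(n)$ counts (with sign) ordered pairs $\{s,s'\}$ with $s<s'$, $s+s'=n$, weighted by $(-1)^{(\text{parity of }s)+(\text{parity of }s')}$, and that this sign is governed by the multiplicativity of the parity function on disjoint supports. Since every finite subset sum $\sum_i\varepsilon_i h_i$ has a unique representation (we are not assuming distinctness of values a priori, but the statement is about the \emph{sets} $H_0,H_1$, so I will first argue that the natural setting is to track formal subsets rather than their integer values, and then descend), the generating function of $\chi_A-\chi_B$ factors nicely.

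Concretely, I would work in the ring of formal power series and set $F(x)=\sum_{n\ge 0}(\chi_A(n)-\chi_B(n))x^n$. Each nonnegative integer that is a finite subset sum of the $h_i$ contributes $+1$ if it uses an even number of the $h_i$ and $-1$ if it uses an odd number; integers not of this form contribute $0$. Hence, \emph{if the $h_i$ were such that all subset sums are distinct}, one would have $F(x)=\prod_{i\ge 1}(1-x^{h_i})$, the product converging in the $x$-adic topology since $h_i\to\infty$. Then $\sum_{n}(R_A(n)-R_B(n))x^n$ would be, up to the diagonal and ordering conventions, essentially $\tfrac12\bigl(F(x)^2-F(x^2)\bigr)$ (the standard identity relating the sum-of-two-distinct-elements generating function to the square of the characteristic generating function), and since $F(x)^2=\prod_i(1-x^{h_i})^2$ while $F(x^2)=\prod_i(1-x^{2h_i})$, one checks $F(x)^2-F(x^2)$ has vanishing coefficients for all $n\ge 1$. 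Even without distinctness, the cleaner route is to avoid the square entirely: note $R_A(n)+R_B(n)$ together with the ``$s=s'$'' term equals $R_1$-type data, and instead prove $R_A(n)=R_B(n)$ by exhibiting an explicit sign-reversing involution on the set of representations.

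Therefore the main line I would actually pursue is the \textbf{involution}. Given $n\ge 1$ and a representation $n=s+s'$ with $s<s'$, where $s=\sum_i \varepsilon_i h_i$ and $s'=\sum_i \delta_i h_i$ are the (chosen) subset-sum expansions, let $j$ be the smallest index with $\varepsilon_j\ne\delta_j$ — such $j$ exists since $s\ne s'$. Define the involution by toggling $\varepsilon_j$ and $\delta_j$ simultaneously: this keeps $s+s'=n$ fixed, swaps which of $s,s'$ contains $h_j$, and changes exactly one of the two parities, hence flips $(-1)^{\sum\varepsilon_i+\sum\delta_i}$; one must then check it preserves the constraint $s<s'$ or else pair it with the transposition $s\leftrightarrow s'$ to stay within the $s<s'$ regime. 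This involution is fixed-point-free on the signed count, so the signed count is $0$, i.e. the number of representations with $s\in A,s'\in A$ or $s\in B,s'\in B$ equals the number with one in $A$ and one in $B$; combined with the (easy) complementary count $R_{A,B}(n)$ and the identity $R_A(n)+R_B(n)+R_{A,B}(n)=\binom{|\{s\in A\cup B: s<n\}|}{?}$-type bookkeeping, one extracts $R_A(n)=R_B(n)$. The hard part — and the only genuinely delicate point — is handling the \emph{non-injectivity of subset sums}: if distinct $\varepsilon$-vectors give the same integer $s$, the ``expansion'' is not well defined, so I would either (a) restrict attention to a canonical expansion (e.g. lexicographically least), verifying the involution respects canonicity, or (b) observe that in the regime of the theorem the relevant $(h_i)$ (arising in $S_1(l),S_2(l)$) do have distinct subset sums for the indices that matter, reducing to the clean case. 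Since this lemma is cited from \cite{Kiss1}, I expect the intended proof is in fact the generating-function factorization $\prod_i(1-x^{h_i})$ route, made rigorous by the canonical-expansion convention, and I would present that as the primary argument with the involution as the combinatorial shadow of it.
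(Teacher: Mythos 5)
The paper offers no proof of this lemma at all --- it is imported verbatim from \cite[Theorem 3]{Kiss1} --- so your argument has to stand on its own, and as written it does not. The central generating-function identity is wrong. Writing $f_A(x)=\sum_{a\in A}x^a$ and $f_B(x)=\sum_{b\in B}x^b$, one has $\sum_n R_A(n)x^n=\tfrac12(f_A(x)^2-f_A(x^2))$, hence
\begin{equation*}
\sum_{n\ge 1}(R_A(n)-R_B(n))x^n=\tfrac12\Bigl[(f_A-f_B)(x)\,(f_A+f_B)(x)-(f_A-f_B)(x^2)\Bigr],
\end{equation*}
which involves the product $F\cdot G$ with $G=f_A+f_B$, not $F^2$. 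Your quantity $\tfrac12(F(x)^2-F(x^2))$ computes (for odd $n$) $R_A(n)+R_B(n)-R_{A,B}(n)$, which does \emph{not} vanish: for $h_i=2^{i-1}$ (so $A=U$, $B=V$) and $n=3$ it equals $1+1-0=2$. Relatedly, $\prod_i(1-x^{h_i})^2\ne\prod_i(1-x^{2h_i})$ already for a single factor. The identity that actually closes the proof is $F(x)G(x)=F(x^2)$, i.e.\ $\prod_i(1-x^{h_i})\cdot\prod_i(1+x^{h_i})=\prod_i(1-x^{2h_i})$, using that (under distinct subset sums) $f_A+f_B=\prod_i(1+x^{h_i})$. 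Your fallback involution has a parallel error: toggling $\varepsilon_j$ and $\delta_j$ flips \emph{both} individual parities, so $(-1)^{\sum\varepsilon_i+\sum\delta_i}$ is unchanged rather than reversed. What the toggle really does is exchange $A\times A$ representations with $B\times B$ representations, so the correct framing is a bijection between those two families; even then you must handle the degenerate case $s'-s=2h_j$, where the toggled pair collapses to $s_{\mathrm{new}}=s'_{\mathrm{new}}$ and leaves the domain $s<s'$.

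You are also right that non-injectivity of subset sums is the delicate point, but it is worse than delicate: the lemma as literally stated, for arbitrary $h_i\in\mathbb{Z}^{+}$, is false. Take $h_1=h_2=1$: then $H_0=\{0,2\}$, $H_1=\{1\}$, and $R_{H_0}(2)=1\ne 0=R_{H_1}(2)$. No canonical-expansion convention can repair this, so your option (a) is a dead end; a hypothesis forcing all finite subset sums to be distinct is genuinely required, and your option (b) --- reducing to the sequences actually used in this paper ($S_1(l)$, $S_2(l)$, $X$, $E_u$), all of which do have distinct subset sums --- is the only viable route. With that hypothesis and the corrected identity $F\cdot G=F(x^2)$, the generating-function argument is complete and is presumably the proof in \cite{Kiss1}.
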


\begin{lem}\label{lem1w} Let $u$ be an nonnegative integer and
 let $E_u$, $F_u$ be sets of nonnegative integers such that $E_u\cup F_u=[0,2^{u+1}+1+2^u]\setminus\{2^u\}$, $0\in E_u$ and $E_u\cap F_u=\emptyset$.
 Then $R_{E_u}(n)=R_{F_u}(n)$ for every positive integer $1\leq n\leq 2^{u+1}+1+2^u$ if and only if
$$E_u=U_{u}\cup(2^{u}+1+V_{u})\cup(2^{u+1}+1+V_{u}),$$
$$F_u=V_{u}\cup(2^{u}+1+U_{u})\cup(2^{u+1}+1+U_{u})\cup\{2^{u+1}+1+2^u\}.$$
\end{lem}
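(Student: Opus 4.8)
The plan is to prove the "if" direction by a direct verification and to obtain the "only if" direction from the uniqueness principle of Lemma \ref{999999} together with the structural result of Lemma \ref{lem999999}. First I would record the set on which we work: $[0,2^{u+1}+1+2^u]\setminus\{2^u\}$. Note $2^{u+1}+1+2^u = 3\cdot 2^u + 1$, and that removing the single point $2^u$ puts us in the setting of Lemma \ref{999999} with exactly one forbidden residue $r_1=2^u \le m := 3\cdot 2^u+1$. Hence at most one partition $(E_u,F_u)$ with $0\in E_u$ and the prescribed representation-function equality can exist, so once the "if" direction is checked, the "only if" direction is automatic.

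For the "if" direction, the key observation is that the proposed $E_u,F_u$ are built by $U$-$V$ translation gadgets of the type handled by Lemma \ref{lemy1}. Concretely, I would check first that
$$U_u\cup(2^u+1+V_u)\ \text{and}\ V_u\cup(2^u+1+U_u)$$
is, up to the extra top element, a model of the pattern from Lemma \ref{lem999999}/Problem 2 on $[0,2^{u+1}]\setminus\{2^u\}$: indeed $[0,2^u-1]=U_u\cup V_u$ and $2^u+1+[0,2^u-1]=[2^u+1,2^{u+1}]$, so the two blocks together cover $[0,2^{u+1}]\setminus\{2^u\}$. Writing $h_1=1,\dots,h_u=2^{u-1}$ for the bits inside a block and $h_{u+1}=2^u+1$ for the block shift, one sees $U_u\cup(2^u+1+V_u)=H_0(h_1,\dots,h_{u+1})$ and $V_u\cup(2^u+1+U_u)=H_1(h_1,\dots,h_{u+1})$, because an element of the lower block uses $\varepsilon_{u+1}=0$ and has digit-sum parity as in $U_u$, while an element of the shifted block uses $\varepsilon_{u+1}=1$ and flips parity, exactly matching the $V_u$-shift. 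Then adjoining the further shift $h_{u+2}=2^{u+1}+1$ doubles the construction again, giving $E_u'=H_0(h_1,\dots,h_{u+2})$, $F_u'=H_1(h_1,\dots,h_{u+2})$ on $[0,2^{u+2}+2]\setminus(\{2^u\}\cup\{2^{u+1}+1+2^u\})$; Lemma \ref{lemy1} then yields $R_{E_u'}(n)=R_{F_u'}(n)$ for every positive integer $n$. The stated $E_u,F_u$ differ from $E_u',F_u'$ only in that the single point $2^{u+1}+1+2^u$ (the image of the removed $2^u$ under the top shift) has been moved from $E_u'$ to $F_u'$ and is here placed in $F_u$. I would argue that for $n\le 2^{u+1}+1+2^u$ this relocation does not change any representation count: that point is the largest element in play, so it can only appear in a representation $n=s+s'$ with $s<s'$ when $n=2^{u+1}+1+2^u$ itself (forcing $s'=$ that point, $s=0$), and $0\in E_u$ while $0\notin F_u$, so both before and after the relocation there is exactly one such representation with one summand in each set and none with both in the same set — hence $R_{E_u}(n)=R_{F_u}(n)$ for all $1\le n\le 2^{u+1}+1+2^u$.

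The main obstacle I anticipate is the bookkeeping in the previous paragraph: making the identifications $E_u=H_0(h_1,\dots,h_{u+2})\setminus\{2^{u+1}+1+2^u\}$ and $F_u=H_1(h_1,\dots,h_{u+2})\cup\{2^{u+1}+1+2^u\}$ precise, and verifying carefully that the lone relocated top element contributes the same to $R_{E_u}$ and $R_{F_u}$ on the whole range $[1,2^{u+1}+1+2^u]$ (the one delicate case being $n=2^{u+1}+1+2^u$, where one must use $0\in E_u$, $0\notin F_u$ to balance). An alternative, fully self-contained route for the "if" direction avoids Lemma \ref{lemy1} entirely: check $R_{E_u}(n)=R_{F_u}(n)$ by splitting $n$ according to which of the three blocks $[0,2^u-1]$, $2^u+1+[0,2^u-1]$, $2^{u+1}+1+[0,2^u-1]$ its two summands fall in, and repeatedly invoke the block-level identity $\#\{x\in U_u:n-x\in U_u\}=\#\{x\in V_u:n-x\in V_u\}$ and the cross identity $\#\{x\in U_u:n-x\in V_u\}$-type relations coming from $R_{U_u}=R_{V_u}$ (Lemma \ref{lem999999}); but this is more computational, so I would present the $H_0/H_1$ argument as the main line. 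Either way, the "only if" direction needs no further work once "if" is established, by the uniqueness in Lemma \ref{999999}.
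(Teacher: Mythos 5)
Your overall strategy is sound and the necessity direction is exactly the paper's: Lemma \ref{999999} gives uniqueness, so necessity is automatic once sufficiency is proved. For sufficiency you take a genuinely different route from the paper: you identify $E_u,F_u$ (up to the top element) with $H_0(h_1,\dots,h_{u+2})$, $H_1(h_1,\dots,h_{u+2})$ for $h_1=1,\dots,h_u=2^{u-1}$, $h_{u+1}=2^u+1$, $h_{u+2}=2^{u+1}+1$ and invoke Lemma \ref{lemy1}, whereas the paper decomposes $R_{E_u}(n)$ and $R_{F_u}(n)$ block by block (contributions from $[0,2^u-1]$, $2^u+1+[0,2^u-1]$, $2^{u+1}+1+[0,2^u-1]$) and cancels terms using $R_{U_u}=R_{V_u}$ from Lemma \ref{lem999999}. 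Your route buys a cleaner conceptual explanation (the sets are truncated $H_0/H_1$ sets), at the cost of the truncation bookkeeping; the paper's is more computational but needs no discussion of elements outside the interval.

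However, there is a concrete error in your key identification. The point $z=2^{u+1}+1+2^u$ is \emph{not} an element of $E_u'=H_0(h_1,\dots,h_{u+2})$ that gets ``moved'' to $F_u$: it is not a subset sum of $h_1,\dots,h_{u+2}$ at all, since the sums with $\varepsilon_{u+2}=1,\varepsilon_{u+1}=0$ reach only up to $2^{u+1}+2^u$ and those with $\varepsilon_{u+1}=\varepsilon_{u+2}=1$ start at $2^{u+1}+2^u+2$ (also, the union of all subset sums is $[0,2^{u+2}+1]\setminus\{2^u,z\}$, not $[0,2^{u+2}+2]\setminus\cdots$). This is not a harmless slip: if $z$ really had been removed from $E_u'$ and placed in $F_u$, then at $n=z$ the count $R_{E}(z)$ would lose the representation $0+z$ (as $0\in E_u$) while $R_{F}(z)$ would gain nothing (as $0\notin F_u$), so your claimed balance at $n=z$ would fail; your stated ``before'' count is inconsistent with your own premise. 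The argument is saved precisely because $z$ lies in neither $H_0$ nor $H_1$: the correct statement is $E_u=H_0\cap[0,z]$ and $F_u=\bigl(H_1\cap[0,z]\bigr)\cup\{z\}$, the elements of $H_0\cup H_1$ exceeding $z$ (the block $2^{u+1}+2^u+2+[0,2^u-1]$) cannot occur in any representation of an $n\le z$, and adjoining $z$ to $F_u$ adds no representation for $n\le z$ because its only possible partner would be $0\notin F_u$. With that correction your proof goes through and, combined with Lemma \ref{999999}, yields the lemma.
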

\begin{proof} Sufficiency. It is clear that
$$E_u\cup F_u=[0,2^{u+1}+1+2^u]\setminus\{2^u\},~~0\in E_u,~~ E_u\cap F_u=\emptyset.$$

If $1\leq n\leq 2^{u+1}+1+2^u$, then
$$R_{E_u}(n)=R_{U_{u}}(n)+R_{U_{u},V_{n}}(n-(2^{u}+1))+R_{U_{u},V_{n}}(n-(2^{u+1}+1))+R_{V_{u}}(n-(2^{u+1}+2)),$$
$$R_{F_u}(n)=R_{V_{u}}(n)+R_{U_{u},V_{n}}(n-(2^{u}+1))+R_{U_{u},V_{n}}(n-(2^{u+1}+1))+R_{U_{u}}(n-(2^{u+1}+2)),$$
it follows from Lemma \ref{lem999999} that $R_E(n)=R_F(n)$.

Necessity. The necessity follows from Lemma \ref{999999} and the sufficiency.

This completes the proof of Lemma \ref{lem1w}.
\end{proof}

\begin{lem}\label{lemh}Let $u$ and $M$ be two nonnegative integers such that $2^{u+1}+2\leq M\leq2^{u+1}+2^u$ and
 let $E_u$, $F_u$ be defined as Lemma \ref{lem1w}. Let $A, B$ and $T$ be three sets such that $A\cup B=\mathbb{N}\setminus T$, $T=\{2^u<M<\ldots\}$, $\min (T\setminus \{2^u,M\})\geq 2^{u+1}+2^u+1$, $A\cap B=\emptyset$, $0\in A$.
 If $R_{A}(n)=R_{B}(n)$ for every positive integer $n$, then $M\notin
E_u$, $M\in F_u$ and for every positive integer $n$ with $M\leq n<2^{u+1}+2^{u}+1$, we have
\begin{equation}\label{bx1}\chi_{F_u}(n-M)=\chi_{F_u}(n-M+1)+\chi_A(n+1)-\chi_{E_u}(n+1).\end{equation}
\end{lem}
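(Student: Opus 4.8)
The plan is to deduce Lemma~\ref{lemh} from Lemma~\ref{lem2}, applied with the substitution $C=E_u$, $D=F_u$, $L=M$ and $K=2^{u+1}+2^u+1$, and then to simplify the identity it produces by exploiting the fact that $T$ has no element strictly between $M$ and $K$. In this way the pair $(E_u,F_u,M)$ together with the single ``heavy'' point $M$ of $T$ plays here the role that $(U,V,\min T)$ and the whole truncation $T(n)$ played in Lemma~\ref{lem3}.

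First I would verify the hypotheses of Lemma~\ref{lem2}. From $2^{u+1}+2\le M\le 2^{u+1}+2^u$ one gets $M\le K$ and $2M\ge 2^{u+2}+4\ge 2^{u+1}+2^u+1=K$, hence $L\le K\le 2L$. Since $K=2^{u+1}+1+2^u$ is exactly the right endpoint of the interval in the definition of $E_u\cup F_u$, we have $E_u(K)\cup F_u(K)=E_u\cup F_u=[0,K]\setminus\{2^u\}$ and $E_u(K)\cap F_u(K)=\emptyset$; moreover $T(L-1)=T\cap[0,M-1]=\{2^u\}$ because $2^u<M$ while $\min(T\setminus\{2^u,M\})\ge 2^{u+1}+2^u+1>M-1$, so $C(K)\cup D(K)=[0,K]\setminus T(L-1)$ and $C(K)\cap D(K)=\emptyset$. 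The remaining requirements $M=L\in T$, $0\in A\cap E_u$, $0\notin B\cup F_u$, $A\cup B=\mathbb{N}\setminus T$ and $A\cap B=\emptyset$ are immediate from the hypotheses of the present lemma together with $0\in E_u$ and $E_u\cap F_u=\emptyset$. Finally $R_A(n)=R_B(n)$ holds for all $n\le K$ by hypothesis, and since $E_u,F_u$ have the explicit form of Lemma~\ref{lem1w}, that lemma gives $R_{E_u}(n)=R_{F_u}(n)$ for all $1\le n\le 2^{u+1}+1+2^u=K$. Lemma~\ref{lem2} then yields $M\notin E_u$ and $M\in F_u$, which is the first assertion, and, as established inside the proof of Lemma~\ref{lem2} (its equation \eqref{eqe1}), also the truncation identities $A(M-1)=E_u(M-1)$ and $B(M-1)=F_u(M-1)$.

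It then remains to extract \eqref{bx1} from the displayed identity of Lemma~\ref{lem2} for each integer $n$ with $M\le n<K=2^{u+1}+2^u+1$. For such $n$ we have $T(L,n)=T\cap[M,n]=\{M\}$, so $T(L,n)\cap E_u=\emptyset$ and $T(L,n)\cap F_u=\{M\}$; hence $R_{E_u,\,T(L,n)\cap E_u}(N)=0$ and $R_{F_u,\,T(L,n)\cap F_u}(N)=\chi_{F_u}(N-M)$ for $N\in\{n,n+1\}$. Using $A(M-1)=E_u(M-1)$ and $M\notin E_u$, every element $t$ of $E_u\setminus(A\cup T(L,n))=E_u\setminus(A\cup\{M\})$ satisfies $t>M$, whence $N-t\le K-(M+1)\le 2^u-2<\min T$ for $N\in\{n,n+1\}$, so $R_{T,\,E_u\setminus(A\cup T(L,n))}(N)=0$; the same bound, via $B(M-1)=F_u(M-1)$, gives $R_{T,\,F_u\setminus(B\cup T(L,n))}(N)=0$. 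Moreover $\varepsilon=0$ throughout this range, since $n=2M-1$ is impossible ($2M-1\ge 2^{u+2}+3>K-1\ge n$). Substituting all of this into the identity of Lemma~\ref{lem2} collapses it to $\chi_{F_u}(n-M)=\chi_{F_u}(n-M+1)+\chi_A(n+1)-\chi_{E_u}(n+1)$, which is precisely \eqref{bx1}.

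The only delicate point is this last bookkeeping: one must check that $K=2^{u+1}+2^u+1$ simultaneously lies in $[L,2L]$ and does not exceed $2^{u+1}+1+2^u$ (so that both Lemma~\ref{lem1w} and the computation $T(L-1)=\{2^u\}$ apply on all of $[0,K]$), and that the two ``$T$-cross terms'' of Lemma~\ref{lem2} genuinely vanish on the entire range $M\le n<K$. Both facts reduce to combining $M\ge 2^{u+1}+2$ with $\min(T\setminus\{2^u,M\})\ge 2^{u+1}+2^u+1$, and neither is a genuine obstacle; it is essentially the same simplification already carried out in the proof of Lemma~\ref{lem3}.
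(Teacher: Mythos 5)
Your proof is correct and takes essentially the same route as the paper: both apply Lemma \ref{lem2} with $C=E_u$, $D=F_u$, $L=M$ (and in effect $K=2^{u+1}+2^u+1$), note that $T\cap[M,n]=\{M\}$ so the two $T$-cross terms vanish while $R_{F_u,T(L,n)\cap F_u}(N)$ reduces to $\chi_{F_u}(N-M)$ and $R_{E_u,T(L,n)\cap E_u}(N)=0$, and observe $\varepsilon=0$ since $n=2M-1$ is out of range. Your bookkeeping (the explicit check $L\le K\le 2L$, $T(L-1)=\{2^u\}$, and the use of $A(M-1)=E_u(M-1)$, $B(M-1)=F_u(M-1)$ to force $t>M$) is, if anything, spelled out more carefully than in the paper.
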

\begin{proof}Now we employ Lemma \ref{lem2} with $C=E_u$, $D=F_u$ and $L=M$, where $E_u$ and $F_u$ are defined as in Lemma \ref{lem1w}. Then $M\notin
E_u$ and $M\in F_u$. For any $L\leq n< 2^{u+1}+1+2^u$, by \eqref{eqe1} and $L\in T(L,n)$, we have $\min (D\setminus (B\cup T(L, n)))>
L$ and $\min (C\setminus (A\cup T(L, n)))>L$. Thus $$\min T+\min (C\setminus (A\cup T(L, n)))>
2^u+L=2^u+M\geq 2^{u+1}+2^u+2,$$ $$ \min T+\min (D\setminus (B\cup T(L, n)))> 2^u+M\geq 2^{u+1}+2^u+2.$$ Hence
$R_{T,C\setminus (A\cup T(L, n))}(n)-R_{T,D\setminus (B\cup T(L, n))}(n)=0$ for $n\le 2^{u+1}+2^u+2$.
Thus, for $M\leq n< 2^{u+1}+2^u+1$, we have
\begin{eqnarray*} &&R_{F_u, T(L, n)\cap
F_u }(n)-R_{E_u, T(L, n)\cap E_u }(n) \\
&=&R_{F_u, T(L, n)\cap
F_u }(n+1)-R_{E_u, T(L, n)\cap E_u }(n+1)  +\chi_A
(n+1)-\chi_{E_u}(n+1),
\end{eqnarray*}
Noting that $M\in F_u$ and $\min (T\setminus \{2^u,M\})\geq 2^{u+1}+2^u+1$, we have
$$\chi_{F_u}(n-M)=\chi_{F_u}(n-M+1)+\chi_A(n+1)-\chi_{E_u}(n+1).$$

This completes the proof of Lemma \ref{lemh}.
\end{proof}

\begin{lem}\label{lemyc2}  Let $h_1=2$, $h_2=3$, $h_3=4,\ldots,h_l=2^{l-1},\ldots$. Let
$$X=H_0(h_1,h_2,\ldots),~~~Y=H_1(h_1,h_2,\ldots).$$
Then
$$X\cup Y=\mathbb{N}\setminus\{1\},~~X\cap Y=\emptyset$$
and
$R_{X}(n)=R_{Y}(n)$ for every positive integer $n$.
\end{lem}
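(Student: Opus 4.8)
The plan is to treat the three assertions separately. The equality $R_{X}(n)=R_{Y}(n)$ for every positive integer $n$ needs no new argument: since $X=H_0(h_1,h_2,\ldots)$ and $Y=H_1(h_1,h_2,\ldots)$ with all $h_i\in\mathbb{Z}^{+}$, it is exactly the conclusion of Lemma \ref{lemy1}. So the real content is the structural claim $X\cup Y=\mathbb{N}\setminus\{1\}$ and $X\cap Y=\emptyset$, and I would reduce both of these to a single fact: \emph{every nonnegative integer $n\ne 1$ has exactly one representation $n=\sum_i\varepsilon_i h_i$ with $\varepsilon_i\in\{0,1\}$ and finitely many $\varepsilon_i$ nonzero, while $1$ has none.}

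To prove that fact, first I would read off the sequence: $h_1=2$, $h_2=3$, and $h_l=2^{l-1}$ for $l\ge 3$, so that the terms other than $h_2$ are precisely the powers of two that are $\ge 2$, i.e. $\{h_i:i\ne 2\}=\{2^{j}:j\ge 1\}$. The subset sums of $\{2^{j}:j\ge 1\}$ are exactly the even nonnegative integers, each arising in a unique way (divide by $2$ and invoke uniqueness of binary expansion). Allowing also the term $h_2=3$ translates these by $3$, producing exactly the odd integers $\ge 3$, again uniquely. Since no integer is both even and odd, the ``with $h_2$'' and ``without $h_2$'' representations can never coincide, so the representable integers are $2\mathbb{N}\cup(3+2\mathbb{N})=\mathbb{N}\setminus\{1\}$, each with a single representation.

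Granting this, the rest is bookkeeping. For each $n\ne 1$ the parity $\sum_i\varepsilon_i\bmod 2$ of its unique representation is well defined, so $n$ lies in exactly one of $X$, $Y$; hence $X\cap Y=\emptyset$ and $X\cup Y=\mathbb{N}\setminus\{1\}$. (If one wants the parities explicitly: an even $n$ uses $D_2(n)$ terms, an odd $n\ge 3$ uses $1+D_2(n-3)$ terms — but this is not needed.) I do not expect a genuine obstacle: the only step requiring care is the elementary verification that the non-$h_2$ terms of $(h_i)$ exhaust $\{2^{j}:j\ge 1\}$ with no repetition and no gap, and that $3$ is odd while every subset sum of $\{2^{j}:j\ge1\}$ is even — after that, everything rests on uniqueness of binary representation plus Lemma \ref{lemy1}.
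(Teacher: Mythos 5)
Your proposal is correct and follows the same route as the paper: the equality $R_X(n)=R_Y(n)$ is obtained by a direct application of Lemma \ref{lemy1}, while the structural claims $X\cup Y=\mathbb{N}\setminus\{1\}$ and $X\cap Y=\emptyset$ (which the paper simply declares ``clear'') are what you verify explicitly via the unique subset-sum representation of every $n\neq 1$ by the terms $\{2^j:j\ge 1\}\cup\{3\}$. Your filled-in verification is sound, so there is nothing to correct.
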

\begin{proof} It is clear that
$$X\cup Y=\mathbb{N}\setminus\{1\},~~X\cap Y=\emptyset.$$
Then Lemma \ref{lemyc2} follows from Lemma \ref{lemy1} immediately.
\end{proof}

\begin{lem}\label{chen55555}Let $m$ be a positive integer with $m\geq2$. Let $A$ and $B$ be sets such that $A\cup B=\mathbb{N}\setminus \{1+mk:k\in \mathbb{N}\}$, $A\cap B=\emptyset$ and $R_A(n)=R_B(n)$ for every positive integer $n$. Then $m=2$ or $3$.
\end{lem}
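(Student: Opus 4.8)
The plan is to reduce the problem to the representation‑preserving splitting of $\mathbb{N}\setminus\{1\}$ supplied by Lemma~\ref{lemyc2}, and then to extract a contradiction from the functional equation obeyed by the difference of the associated generating functions. Write $T=\{1+mk:k\in\mathbb{N}\}$, so $\min T=1$ and $0\notin T$; interchanging $A,B$ if necessary, assume $0\in A$. Let $h_1=2,h_2=3,h_3=4,h_4=8,\ldots,h_j=2^{j-1}$ for $j\ge 3$, and let $X=H_0(h_1,h_2,\ldots)$, $Y=H_1(h_1,h_2,\ldots)$ be as in Lemma~\ref{lemyc2}: $X\cup Y=\mathbb{N}\setminus\{1\}$, $X\cap Y=\emptyset$, $0\in X$, and $R_X(n)=R_Y(n)$ for all $n\ge 1$. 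First I would show that the initial segment of $(A,B)$ is forced onto that of $(X,Y)$: since $T\cap[0,m]=\{1\}$, both $(A\cap[0,m],B\cap[0,m])$ and $(X\cap[0,m],Y\cap[0,m])$ partition $[0,m]\setminus\{1\}$, contain $0$ in their first component, and have equal representation functions on $[1,m]$; by the uniqueness in Lemma~\ref{999999} they coincide, so $\chi_A(n)=\chi_X(n)$ and $\chi_B(n)=\chi_Y(n)$ for all $n\le m$.

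Next I would pass to the generating functions $g(x)=\sum_n(\chi_A(n)-\chi_B(n))x^n$ and $\gamma(x)=\sum_n(\chi_X(n)-\chi_Y(n))x^n$. The standard identity $\big(\sum_{s\in S}x^s\big)^2=\sum_{s\in S}x^{2s}+2\sum_n R_S(n)x^n$, combined with $A\cup B=\mathbb{N}\setminus T$, $X\cup Y=\mathbb{N}\setminus\{1\}$ and $R_A=R_B$, $R_X=R_Y$, gives
\[ g(x)(1-x+x^2-x^m)=g(x^2)(1-x)(1-x^m), \qquad \gamma(x)(1-x+x^2)=\gamma(x^2)(1-x). \]
By the previous step $D(x):=g(x)-\gamma(x)$ is divisible by $x^{m+1}$; write $D(x)=x^{m+1}E(x)$ with $E(x)=\sum_{k\ge 0}e_kx^k$. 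Each $e_k$ is a difference of two $\{-1,0,1\}$-valued numbers, so $e_k\in\{-2,-1,0,1,2\}$, and since $m+1\in T$ forces $\chi_A(m+1)=\chi_B(m+1)=0$ we have $e_0=-\gamma_{m+1}$. Substituting $g=\gamma+D$ into the first equation and reducing the $\gamma$-part via the second one (which turns $\gamma(x)(1-x+x^2-x^m)-\gamma(x^2)(1-x)(1-x^m)$ into $x^{m+1}\gamma(x)(x-1)$), then dividing by $x^{m+1}$, I obtain
\[ E(x)(1-x+x^2-x^m)-x^{m+1}E(x^2)(1-x)(1-x^m)=\gamma(x)(1-x). \]
Reading off the coefficient of $x^N$ for $0\le N<m$, where the two heavy terms on the left contribute nothing, yields the recursion $e_N-e_{N-1}+e_{N-2}=\gamma_N-\gamma_{N-1}$, with $e_{-1}=e_{-2}=\gamma_{-1}=0$.

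Finally I would close the argument for $m\ge 4$. From $X\cap[0,5]=\{0,5\}$ and $Y\cap[0,5]=\{2,3,4\}$ we have $(\gamma_0,\ldots,\gamma_5)=(1,0,-1,-1,-1,1)$. The case $N=0$ of the recursion gives $e_0=1$, hence $\gamma_{m+1}=-1$, i.e.\ $m+1\in Y$; but $Y$ meets $\{5,6,7\}$ in nothing (these are the two‑term sums $2+3,2+4,3+4\in X$, while the smallest three‑term sum is $2+3+4=9$), so $m+1\ge 8$ and thus $m\ge 7$. Then the recursion is valid for $N=0,1,\ldots,5$ and forces in turn $e_0,e_1,e_2,e_3,e_4,e_5=1,0,-2,-2,0,4$, contradicting $e_5\in\{-2,\ldots,2\}$. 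Hence $m\le 3$, and as $m\ge 2$ we get $m\in\{2,3\}$; the cases $m=2,3$ are genuinely left untouched, since there $N=5$ falls outside the admissible range — consistent with the Main Theorem, which exhibits solutions for both.

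I expect the main difficulty to be the first step — correctly deducing from Lemma~\ref{999999} that $(A,B)$ agrees with $(X,Y)$ up through $m$ — together with the bookkeeping that produces the clean perturbation equation for $E$; once that equation is in place the contradiction is a brief finite computation. An equivalent, purely combinatorial route would keep $(X,Y)$ in play and invoke Lemma~\ref{lem2} with $C=X$, $D=Y$, $L=m+1$, extracting from its recursion the same control of $A$ beyond $[0,m]$ that the generating‑function computation encodes.
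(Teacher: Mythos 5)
Your proposal is correct, and its skeleton matches the paper's: both compare $(A,B)$ with the model pair $(X,Y)$ of Lemma~\ref{lemyc2}, force the initial coincidence $A(m)=X(m)$, $B(m)=Y(m)$ via the uniqueness Lemma~\ref{999999}, and then reach a contradiction for $m\ge 4$ from a short finite computation driven by the values $\chi_X,\chi_Y$ on $\{0,1,\dots,5\}$ (including the observation that $m+1$ would have to lie in $Y$, which already kills $m=4,5,6$; the paper only extracts $m\ge 5$ from this, which is all it needs). Where you genuinely differ is the engine: the paper applies its combinatorial Lemma~\ref{lem2} with $C=X$, $D=Y$, $L=m+1$, $K=2L$ successively at $n=L,L+1,\dots,L+4$, tracking the correction terms $R_{T,X\setminus(A\cup T(L,n))}$, $R_{T,Y\setminus(B\cup T(L,n))}$ by hand until $\chi_A(L+2)-\chi_X(L+2)=2$ forces a contradiction, whereas you encode $R_A=R_B$ and $R_X=R_Y$ in the functional equations $g(x)(1-x+x^2-x^m)=g(x^2)(1-x)(1-x^m)$ and $\gamma(x)(1-x+x^2)=\gamma(x^2)(1-x)$, isolate the perturbation $D=g-\gamma=x^{m+1}E$, and read off the clean recursion $e_N-e_{N-1}+e_{N-2}=\gamma_N-\gamma_{N-1}$ for $N<m$, whose forced values $1,0,-2,-2,0,4$ violate $|e_5|\le 2$ (I checked the $\gamma$-values, the reduction to $x^{m+1}\gamma(x)(x-1)$, and the recursion; all are right). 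Your route buys a more transparent and self-contained contradiction, free of the $\varepsilon$-corrections and the bookkeeping of the $R_{T,\cdot}$ terms in Lemma~\ref{lem2}; the paper's route buys uniformity, reusing the same Lemma~\ref{lem2} machinery that it needs anyway for Lemmas~\ref{lem3}, \ref{lemh}, \ref{55555} and the main proof, and staying entirely elementary. As you note yourself, the two are essentially intertranslatable, your Lemma~\ref{lem2}-based alternative being exactly the paper's argument.
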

\begin{proof}Now we employ Lemma \ref{lem2} with $C=X$, $D=Y$, $L=1+m$ and $K=2L$, where $X$ and $Y$ are defined as in Lemma \ref{lemyc2}. Then $L\not\in X$ and $L\in Y$. Assume that $m\geq4$. Then by $L=m+1\in Y$, we have $m\geq5$. For any $L\leq n< K$, by \eqref{eqe1} and $L\in T(L,n)$, we have $\min (X\setminus (B\cup T(L, n)))>
L$ and $\min (Y\setminus (A\cup T(L, n)))>L$. Thus $$\min T+\min (X\setminus (A\cup T(L, n)))>
1+L,$$ $$ \min T+\min (Y\setminus (B\cup T(L, n)))>1+L.$$ Hence
$R_{T,X\setminus (A\cup T(L, n))}(n)-R_{T,Y\setminus (B\cup T(L, n))}(n)=0$ for $n\le 1+L$. Noting that $m\geq 5$, we have
$$L+5\leq L+m=1+2m\leq2L-1.$$
 By Lemma \ref{lem2} with $n=L$, we have
\begin{eqnarray*} &&R_{Y, T(L, n)\cap Y }(n)-R_{X, T(L, n)\cap X }(n)\\
&=&R_{Y, T(L,n)\cap Y }(n+1)-R_{X, T(L, n)\cap X }(n+1)+\chi_A (n+1)-\chi_{X}(n+1).
\end{eqnarray*}
Noting that $L\not\in X$, $L\in Y$ and $L<1+2m$, we have
\begin{eqnarray*} \chi_{Y}(0)=\chi_{Y}(1)+\chi_A (L+1)-\chi_{X}(L+1).
\end{eqnarray*}
Noting that $\chi_{Y}(0)=\chi_{Y}(1)=0$, we have $\chi_A (L+1)=\chi_{X}(L+1)$.
Hence,
$$R_{T,X\setminus (A\cup T(L, L+1))}(L+2)-R_{T,Y\setminus (B\cup T(L, L+1))}(L+2)=0.$$
By Lemma \ref{lem2} with $n=L+1$, we have
\begin{eqnarray*} \chi_{Y}(1)=\chi_{Y}(2)+\chi_A (L+2)-\chi_{X}(L+2).
\end{eqnarray*}
Noting that $\chi_{Y}(1)=0,~\chi_{Y}(2)=1$, we have $\chi_A (L+2)=0,~\chi_{X}(L+2)=1$. Hence,
$$R_{T,X\setminus (A\cup T(L, L+2))}(L+2)-R_{T,Y\setminus (B\cup T(L, L+2))}(L+2)=0,$$
$$R_{T,X\setminus (A\cup T(L, L+2))}(L+3)-R_{T,Y\setminus (B\cup T(L, L+2))}(L+3)=1.$$
By Lemma \ref{lem2} with $n=L+2$, we have
\begin{eqnarray*} \chi_{Y}(2)=1+\chi_{Y}(3)+\chi_A (L+3)-\chi_{X}(L+3).
\end{eqnarray*}
Noting that $\chi_{Y}(2)=1,~\chi_{Y}(3)=1$, we have $\chi_A (L+3)=0,~\chi_{X}(L+3)=1$. Hence,
$$R_{T,X\setminus (A\cup T(L, L+3))}(L+3)-R_{T,Y\setminus (B\cup T(L, L+3))}(L+3)=1,$$
$$R_{T,X\setminus (A\cup T(L, L+3))}(L+4)-R_{T,Y\setminus (B\cup T(L, L+3))}(L+4)=1.$$
By Lemma \ref{lem2} with $n=L+3$, we have
\begin{eqnarray*} \chi_{Y}(3)=\chi_{Y}(4)+\chi_A (L+4)-\chi_{X}(L+4).
\end{eqnarray*}
Noting that $\chi_{Y}(3)=1,~\chi_{Y}(4)=1$, we have $\chi_A (L+4)=\chi_{X}(L+4)$. Hence,
$$R_{T,X\setminus (A\cup T(L, L+4))}(L+4)-R_{T,Y\setminus (B\cup T(L, L+4))}(L+4)=1,$$
$$R_{T,X\setminus (A\cup T(L, L+4))}(L+5)-R_{T,Y\setminus (B\cup T(L, L+4))}(L+5)=0.$$
By Lemma \ref{lem2} with $n=L+4$, we have
\begin{eqnarray*} 1+\chi_{Y}(4)=\chi_{Y}(5)+\chi_A (L+5)-\chi_{X}(L+5).
\end{eqnarray*}
Noting that $\chi_{Y}(4)=1,~\chi_{Y}(5)=0$, we have $\chi_A (L+2)-\chi_{X}(L+2)=2$, a contradiction. Hence, $m=2$ or $3$.

This completes the proof of Lemma \ref{chen55555}.
\end{proof}

\begin{lem}\label{55555}Let $m$ and $r$ be two integers with $m\geq 2$, $r\geq 0$ and $m\geq r$. Let $A$ and $B$ be sets such that $A\cup B=\mathbb{N}\setminus \{r+mk:k\in \mathbb{N}\}$, $A\cap B=\emptyset$ and $R_A(n)=R_B(n)$ for every positive integer $n$. Then there exist a nonnegative integer $v$ such that $r=0$ and $m=2^{v}+1$ or there exist nonnegative integer $u$ and $\varepsilon\in\{0,1\}$ such that $r=2^u$ and $m=2^{u+\varepsilon}+1$.
\end{lem}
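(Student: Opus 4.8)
The plan is to split off the degenerate case $r=0$ and reduce it to $r\ge 1$. If $r=0$ then $0\in T$, so $0\notin A\cup B$, and hence $A-1$ and $B-1$ are subsets of $\mathbb{N}$; since $R_{A-1}(n)=R_A(n+2)$ and $R_{B-1}(n)=R_B(n+2)$ for every $n\ge 0$, all the hypotheses pass to the pair $(A-1,B-1)$, which satisfies $(A-1)\cup(B-1)=\mathbb{N}\setminus\{(m-1)+mk:k\in\mathbb{N}\}$ with first gap $r'=m-1$ obeying $1\le r'\le m$. Applying the $r\ge1$ case (proved below) yields $r'=2^{u}$ and $m=2^{u+\varepsilon}+1$; since also $m=r'+1=2^{u}+1$, this forces $\varepsilon=0$ and $m=2^{u}+1$, the asserted form with $v=u$. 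So assume from now on that $r\ge1$.

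Since $r\ge1$ we have $0\notin T$ and $\min T=r$, so after interchanging $A$ and $B$ if necessary we may assume $0\in A$ and invoke Lemma~\ref{lem3}. It gives $r\notin U$, $r\in V$ (so $D_2(r)$ is odd), and its recursion for $1\le n<2r$. The hypothesis $m\ge r$ is exactly what makes this recursion usable: when $n<2r$ one has $T\cap[0,n]\subseteq\{r\}$, so the recursion degenerates. For $1\le n<r$ it gives $\chi_A(n+1)=\chi_U(n+1)$, so $A$ coincides with $U$ on $[0,r]$ (the endpoint $1$ being handled by $R_A(1)=R_B(1)$); and for $r\le n<2r$, setting $j=n-r$, it reads
\begin{equation}\label{starplan}
\chi_A(r+j+1)=\chi_U(r+j+1)+\chi_U(j+1)-\chi_U(j)+\delta_{j},\qquad 0\le j\le r-1,
\end{equation}
with $\delta_{j}=1$ if $j=r-1$ and $\delta_j=0$ otherwise.

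Next I would show $r$ is a power of $2$ using only two instances of \eqref{starplan}. For $0\le j\le r-1$ the integer $r+j+1$ lies in $(r,2r]\subseteq(r,r+m]$ and equals $r+m$ only when $m=r$, $j=r-1$; outside that case $\chi_A(r+j+1)\in\{0,1\}$, so the right-hand side of \eqref{starplan} must lie in $\{0,1\}$. Taking $j=0$, where $\chi_U(0)=1$ and $\chi_U(1)=0$, this forces $\chi_U(r+1)=1$, i.e.\ $D_2(r+1)$ even, which (as $D_2(r)$ is odd) says the number $s$ of trailing $1$'s of $r$ is even. Suppose now $r$ is not a power of $2$, and write $r=2^{a}+r_1$ with $a=\lfloor\log_2 r\rfloor$ and $0<r_1<2^{a}$; then $r_1=1$ would make $D_2(r)=2$ and $r_1=2^{a}-1$ would make $D_2(r)=s=a+1$, both impossible, so $2\le r_1\le 2^{a}-2$ and $r_1$ has exactly $s$ trailing $1$'s. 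Now evaluate \eqref{starplan} at $j=2^{a}$ (here $\delta_j=0$ because $r_1\ge 2$): since $\chi_U(2^{a})=0$ and $\chi_U(2^{a}+1)=1$, it forces $\chi_U(r+2^{a}+1)=0$, i.e.\ $D_2(r+2^{a}+1)$ odd. But $r+2^{a}+1=2^{a+1}+(r_1+1)$ (no carry), whose digit sum is $1+D_2(r_1+1)=1+\bigl(D_2(r)-s\bigr)$ — even, since $D_2(r)$ is odd and $s$ is even. This contradiction shows $D_2(r)=1$, i.e.\ $r=2^{u}$.

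It remains to pin down $m\ge 2^{u}$. The case $u=0$ is Lemma~\ref{chen55555}, giving $m\in\{2,3\}$. For $u\ge 1$, \eqref{starplan} and the agreement on $[0,r]$ identify $A\cap[0,2^{u+1}]$ with the corresponding initial segment of the set $E_u$ of Lemma~\ref{lem1w}. If $m=2^{u}$ the gaps are the positive multiples of $2^{u}$, and one reaches a contradiction by a short direct computation (e.g.\ \eqref{starplan} at $j=r-1$ forces $D_2(2^{u}-1)$ even, and then $R_A(2r+1)=R_B(2r+1)$ fails). If $2^{u}+2\le m\le 2^{u+1}$, the hypotheses of Lemma~\ref{lemh} hold with $M=r+m$, and iterating its recursion from the now-known initial segment — a digit-sum analysis for $E_u,F_u$ entirely parallel to the one above for $U,V$ — leads to a contradiction. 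Finally, if $m\ge 2^{u+1}+2$, the configuration on $[0,2^{u+1}+1+2^{u}]$ has the single gap $2^{u}$, so Lemmas~\ref{999999} and~\ref{lem1w} force it to be $(E_u,F_u)$; but then $R_A(2^{u+1}+2^{u}+2)=R_B(2^{u+1}+2^{u}+2)$ forces $\chi_A(2^{u+1}+2^{u}+2)=R_{F_u}(2^{u+1}+2^{u}+2)-R_{E_u}(2^{u+1}+2^{u}+2)\ge 2$, which is impossible. What remains is $m\in\{2^{u}+1,\,2^{u+1}+1\}$, i.e.\ $m=2^{u+\varepsilon}+1$ with $\varepsilon\in\{0,1\}$, as claimed. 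The main obstacle is this last paragraph — organizing the case split on $m$, and especially iterating Lemma~\ref{lemh}'s recursion (the $E_u,F_u$ digit analysis) and establishing $R_{F_u}(2^{u+1}+2^{u}+2)-R_{E_u}(2^{u+1}+2^{u}+2)\ge 2$; by contrast, the $r=0$ reduction and the proof that $r=2^{u}$ are comparatively short.
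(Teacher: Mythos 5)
Your overall route matches the paper's: reduce $r=0$ by shifting to $r'=m-1$, use Lemma~\ref{lem3} (where $m\ge r$ makes $T\cap[0,n]\subseteq\{r\}$ for $n<2r$) to force $r=2^u$, dispose of $u=0$ via Lemma~\ref{chen55555}, kill $m\ge 2^{u+1}+2$ via Lemmas~\ref{999999}, \ref{lem1w}, \ref{lem999999}, and kill $m=2^u$ at $n=2r+1$. The part that is genuinely worked out — in particular your unified trailing-ones argument for $r=2^u$, which replaces the paper's separate odd/even cases and checks out ($j=0$ gives $s$ even, $j=2^a$ gives $D_2(r+2^a+1)$ odd, contradicting $D_2(r+2^a+1)=1+D_2(r)-s$ even) — is correct, and is if anything a bit cleaner than the paper's treatment of that step.

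However, there is a real gap exactly where you flag the ``main obstacle'': the range $2^u+2\le m\le 2^{u+1}$, i.e.\ $2^{u+1}+2\le M=r+m\le 2^{u+1}+2^u$. Your proposal says that iterating the recursion \eqref{bx1} of Lemma~\ref{lemh} with ``a digit-sum analysis entirely parallel'' to the $U,V$ one leads to a contradiction, but this is not a routine parallel and is where most of the paper's work lies. The recursion \eqref{bx1} only holds for $M\le n<2^{u+1}+2^u+1$, and within that range it does \emph{not} eliminate all candidates: the paper's case analysis (on the parity of $M$ and the binary shape of $M-(2^{u+1}+1)$) leaves two surviving forms, $M=2^{u+1}+1+\sum_{i=c}^{u-1}2^i$ and $M=2^{u+1}+1+\sum_{i=0}^{t}2^i+\sum_{i=t+2}^{u-1}2^i$. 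To exclude these the paper needs an additional argument beyond \eqref{bx1}: it applies Lemma~\ref{lem2n} directly with $n=2^{u+1}+2^u+1$ at the two values $N=2^{u+1}+2^u+1$ and $N=2^{u+1}+2^u+2$ (the latter outside the recursion's range), and combines this with the exact computation $R_{F_u}(2^{u+1}+2^u+2)-R_{E_u}(2^{u+1}+2^u+2)=2$ (via Lemma~\ref{lem999999}) to force $\chi_A(2^{u+1}+2^u+2)\in\{2,3\}$, a contradiction. None of this is supplied or even anticipated in your sketch, so the hardest half of the lemma remains unproved as written; the smaller deferred computations (the value $2$ of $R_{F_u}-R_{E_u}$ at $2^{u+1}+2^u+2$, and the $E_u,F_u$ identification needed in the $m=2^u$ case) are minor by comparison and do go through as you indicate.
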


\begin{proof}  Firstly, we discuss the case of $r\geq1$. If $r=1$, then the result is true follows from Lemma \ref{chen55555}. We may assume that $r\geq2$. Let $u$ be the integer with $2^u\leq r<2^{u+1}$. Then $u\geq1$. Firstly, we will prove $r=2^u$.
Suppose that $2^u< r<2^{u+1}$. We will derive a contradiction.

{\bf{Case 1.}} $2\nmid r$. Let $$r=\sum\limits_{i=0}^{j}2^i+2^{j+2}R$$
with $j,R\in\mathbb{N}$. Noting that $r\geq3$, we have
$$r<r+1\leq r+r-2<2r-1.$$
By Lemma \ref{lem3} with $n=r$, we have

\begin{equation}\label{eq31}\chi_{U}(1)=\chi_{U}(0)+\chi_{A}(r+1)-\chi_{U}(r+1).\end{equation}
 Since $$\chi_{U}(1)=0,~~~\chi_{U}(0)=1,$$
 it follows from \eqref{eq31} that $\chi_{U}(r+1)=1$. Noting that $r\notin U$, we have $2\nmid j$ and $2\nmid D_2(R)$, and so $R\geq1$. Thus, $2^u+\sum\limits_{i=0}^{1}2^i\leq r$, which implies that $2^u\leq r-3$. So $r+2^u<2r-1$. Noting that $2^u<r\leq m$ and, we have $r+2^u<r+m$.
By Lemma \ref{lem3} with $n=r+2^u$, we have
 \begin{equation}\label{eq33}\chi_{U}(2^u+1)=\chi_{U}(2^u)+\chi_{A}(r+2^u+1)-\chi_{U}(r+2^u+1).\end{equation}
 Noting that $D_2(r+2^u+1)=1+D_2(R)$ is even, we have $r+2^u+1\in U$.
Since $$\chi_{U}(2^u+1)=1,~~~\chi_{U}(2^u)=0,$$
 it follows from \eqref{eq33} that $\chi_{A}(r+2^u+1)=2$, a contradiction.

{\bf{Case 2.}} $2\mid r$.  Noting that $2^u+2\leq r$, we have $r+2^u\leq 2r-2$. By Lemma \ref{lem3} with $n=r+2^u<r+m$, we have

\begin{equation}\label{ceq34}\chi_{U}(2^u+1)=\chi_{U}(2^u)+\chi_{A}(r+2^u+1)-\chi_{U}(r+2^u+1).\end{equation}
Since $\chi_{U}(2^u+1)=1,~\chi_{U}(2^u)=0$ and $\chi_{U}(r+2^u+1)=1$ from $2\mid r$,
 it follows from \eqref{ceq34} that $\chi_{A}(r+2^u+1)=2$, a contradiction. Hence, $r=2^u$.

Next we will prove that $m=2^{u}+1$ or $2^{u+1}+1$.
Assume that $m\geq2^{u+1}+2$. It follows from Lemma \ref{999999} and Lemma \ref{lem1w} that
$$A(2^{u+1}+2^u+1)=E,~~B(2^{u+1}+2^u+1)=F.$$
By Lemma \ref{lem1w}, we have $0\in A$ and $1,2^{u+1}+1+2^u\in F$.
Hence,
\begin{eqnarray*}&&R_A(2^{u+1}+2^u+2)\\
&=&R_{A(2^{u+1}+2^u+2)}(2^{u+1}+2^u+2)\\
&=&R_{A(2^{u+1}+2^u+1)}(2^{u+1}+2^u+2)+\chi_A(2^{u+1}+2^u+2)\\
&=&R_{E_u}(2^{u+1}+2^u+2)+\chi_A(2^{u+1}+2^u+2)\\
&=&R_{E_u(2^{u+1}+2^u)}(2^{u+1}+2^u+2)+\chi_A(2^{u+1}+2^u+2)\\
&=&R_{V_u}(2^u)+R_{U_u,V_u}(2^u+1)+\chi_A(2^{u+1}+2^u+2),
\end{eqnarray*}
\begin{eqnarray*}&&R_B(2^{u+1}+2^u+2)\\
&=&R_{B(2^{u+1}+2^u+2)}(2^{u+1}+2^u+2)\\
&=&R_{B(2^{u+1}+2^u+1)}(2^{u+1}+2^u+2)\\
&=&R_{F_u}(2^{u+1}+2^u+2)\\
&=&R_{F_u(2^{u+1}+2^u)}(2^{u+1}+2^u+2)+1\\
&=&R_{U_u}(2^u)+R_{U_u,V_u}(2^u+1)+2,
\end{eqnarray*}
it follows from Lemma \ref{lem999999} that $\chi_A(2^{u+1}+2^u+2)=2$, a contradiction.
Hence, $m\leq 2^{u+1}+1$. If $m=r=2^u$, then by Lemma \ref{lem3} with $n=2r-1=2^{u+1}-1< r+m$, we have

\begin{equation}\label{eq34}\chi_{U}(2^u)=\chi_{U}(2^u-1)+\chi_{A}(2^{u+1})-\chi_{U}(2^{u+1})-1.\end{equation}
Since $$\chi_{U}(2^u)=\chi_{U}(2^{u+1})=\chi_{A}(2^{u+1})=0,$$
it follows from \eqref{eq34} that $\chi_{U}(2^u-1)=1$. So $2\mid u$.
For any integer $0\leq n\leq 2^{u+1}-1$, we have
$$R_{A(2^{u+1}-1)}(n)=R_{B(2^{u+1}-1)}(n),$$
$$R_{E_u(2^{u+1}-1)}(n)=R_{F_u(2^{u+1}-1)}(n),$$
$$A(2^{u+1}-1)\cap B(2^{u+1}-1)=E_u(2^{u+1}-1)\cap F_u(2^{u+1}-1)=\emptyset$$
and
$$A(2^{u+1}-1)\cup B(2^{u+1}-1)=[0,2^{u+1}-1]\setminus\{2^u\}=E_u(2^{u+1}-1)\cup F_u(2^{u+1}-1).$$
It follows from Lemma \ref{999999} that
$$A(2^{u+1}-1)=E_u(2^{u+1}-1),~~ B(2^{u+1}-1)= F_u(2^{u+1}-1).$$
Noting that $0\in A$, $r+m=2^{u+1}\not\in A\cup B$ and $2\mid u$, we have
\begin{eqnarray*}R_{A}(2^{u+1}+1)&=&R_{A(2^{u+1}-1)}(2^{u+1}+1)+\chi_{A}(2^{u+1}+1)\\
&=&R_{E_u(2^{u+1}-1)}(2^{u+1}+1)+\chi_{A}(2^{u+1}+1)\\
&=&R_{E_u(2^{u+1})}(2^{u+1}+1)+\chi_{A}(2^{u+1}+1)\\
&=&R_{U_u,V_u}(2^u)+\chi_{A}(2^{u+1}+1),
\end{eqnarray*}
\begin{eqnarray*}&&R_{B}(2^{u+1}+1)=R_{B(2^{u+1}-1)}(2^{u+1}+1)=R_{F_u(2^{u+1}-1)}(2^{u+1}+1)\\
\\
&=&R_{F_u\setminus\{2^{u+1}\}}(2^{u+1}+1)=R_{F_u(2^{u+1})}(2^{u+1}+1)-1=R_{U_u,V_u}(2^u)-1,
\end{eqnarray*}
It follows that $\chi_{A}(2^{u+1}+1)=-1$, a contradiction. Hence, $2^{u}+1\leq m\leq 2^{u+1}+1$.

Suppose that $2^{u}+2\leq m\leq 2^{u+1}$. We will derive a contradiction.
Let $M=r+m$. Then $2^{u+1}+2\leq M\leq 2^{u+1}+2^u$.

{\bf Case 1.} $M$ is odd. Let
$$M=2^{u+1}+1+\sum\limits_{i=c}^{t}2^i+2^{t+2}R$$
with $t\geq c\geq1$ and $R\in \mathbb{N}$. If $R\geq 1$, then by $M\in F_u$, we have $\chi_{E_u}(M+2^t+1)=1$. If $R=0$ and $t<u-1$, then by $M\in F_u$, we have $\chi_{E_u}(M+2^t+1)=1$. Noting that $m\geq2^{u}+2$ and $M\geq2^{u+1}+2$ , we have
$$r+2m=M+m\geq2^{u+1}+2^u+4.$$
By \eqref{bx1} with $n=M+2^t<2^{u+1}+2^u$, we have
\begin{equation*}\chi_{F_u}(2^t)=\chi_{F_u}(2^t+1)+\chi_A(M+2^t+1)-\chi_{E_u}(M+2^t+1).\end{equation*}
It follows from $\chi_{F_u}(2^t)=1$ and $\chi_{F_u}(2^t+1)=0$ that $\chi_A(M+2^t+1)=2$, a contradiction.
 So $R=0$ and $t=u-1$. Hence,
$$M=2^{u+1}+1+\sum\limits_{i=c}^{u-1}2^i.$$
{\bf Case 2.} $M$ is even. Let
$$M=2^{u+1}+1+\sum\limits_{i=0}^{t}2^i+2^{t+2}R$$
with $t\geq0$ and $R\in \mathbb{N}$. Noting that $M\in F_u$, we have $M\geq2^{u+1}+1+3$. Let $k$ be a positive integer with $2^k\leq M-(2^{u+1}+1)<2^{k+1}$. Then $k\leq u-1$. If $k<u-1$,
then by \eqref{bx1} with $n=M+2^t$, we have
\begin{equation*}\chi_{F_u}(2^t)=\chi_{F_u}(2^t+1)+\chi_A(M+2^t+1)-\chi_{E_u}(M+2^t+1),\end{equation*}
it follows from $\chi_{F_u}(2^t)=1$ and $\chi_{F_u}(2^t+1)=0$ that $\chi_{E_u}(M+2^t+1)=0$. So $t$ is odd and $2\mid D_2(R)$.
By \eqref{bx1} with $n=M+2^{t+1}+2^{t+2}R$, we have
\begin{equation*}\chi_{F_u}(2^{t+1}+2^{t+2}R)=\chi_{F_u}(2^{t+1}+2^{t+2}R+1)+\chi_A(M+2^{t+1}+2^{t+2}R+1)-\chi_{E_u}(M+2^{t+1}+2^{t+2}R+1),\end{equation*}
it follows from $$\chi_{F_u}(2^{t+1}+2^{t+2}R)=1,\chi_{F_u}(2^{t+1}+2^{t+2}R+1)=0$$ and $\chi_{E_u}(M+2^{t+1}+2^{t+2}R+1)=1$ that
$\chi_A(M+2^{t+1}+2^{t+2}R+1)=2$, a contradiction.
So $k=u-1$.
By \eqref{bx1} with $n=M$, we have
\begin{equation*}\chi_{F_u}(0)=\chi_{F_u}(1)+\chi_A(M+1)-\chi_{E_u}(M+1),\end{equation*}
it follows from $\chi_{F_u}(0)=0,\chi_{F_u}(1)=1$ that $\chi_{E_u}(M+1)=1$. Noting that $M\notin E_u$, we have $2\nmid t$ and $2\mid D_2(R)$. If $R=0$, then
$$M=2^{u+1}+1+\sum\limits_{i=0}^{u-1}2^i,$$
and so $\chi_{E_u}(M+1)=0$, a contradiction.
If $R\geq1$, then
$$M=2^{u+1}+1+\sum\limits_{i=0}^{t}2^i+\sum\limits_{i=t'}^{h}2^i+2^{h+2}R'$$
with $h\geq t'\geq t+2$ and $R'\in\mathbb{N}$.
If $R'\geq 1$, then by \eqref{bx1} with $n=M+2^{h}$, we have
\begin{equation*}\chi_{F_u}(2^{h})=\chi_{F_u}(2^{h}+1)+\chi_A(M+2^{h}+1)-\chi_{E_u}(M+2^{h}+1),\end{equation*}
it follows from $\chi_{F_u}(2^{h})=1,\chi_{F_u}(2^{h}+1)=0$ and $\chi_{E_u}(M+2^{h}+1)=1$ that
$\chi_A(M+2^{h}+1)=2$, a contradiction. So $R'=0$. Hence,
$$M=2^{u+1}+1+\sum\limits_{i=0}^{t}2^i+\sum\limits_{i=t'}^{u-1}2^i$$
with $t'\geq t+2$.
 If $t'\geq t+3$,  then by \eqref{bx1} with $n=M+2^{t+1}$, we have
\begin{equation*}\chi_{F_u}(2^{t+1})=\chi_{F_u}(2^{t+1}+1)+\chi_A(M+2^{t+1}+1)-\chi_{E_u}(M+2^{t+1}+1),\end{equation*}
it follows from $\chi_{F_u}(2^{t+1})=1,\chi_{F_u}(2^{t+1}+1)=0$ and $\chi_{E_u}(M+2^{t+1}+1)=1$ that
$\chi_A(M+2^{t+1}+1)=2$, a contradiction. Hence,
$$M=2^{u+1}+1+\sum\limits_{i=0}^{t}2^i+\sum\limits_{i=t+2}^{u-1}2^i.$$
By Lemma \ref{lem2n} with $C(2^{u+1}+1+2^u)=E_u$, $D(2^{u+1}+1+2^u)=F_u$ and $L=M$, $K=2^{u+1}+2^u+2$, where $E_u$ and $F_u$ are as in Lemma \ref{lem1w}.
For any $L\leq n\leq 2^{u+1}+1+2^u$, by \eqref{eqe1} and $L\in T(L,n)$, we have $\min (D\setminus (B\cup T(L, n)))>
L$ and $\min (C(n)\setminus (A\cup T(L, n)))>L$. Thus $$\min T+\min (C\setminus (A\cup T(L, n)))>
2^u+L=2^u+M\geq2^{u+1}+2+2^u,$$ $$ \min T+\min (D\setminus (B\cup T(L, n)))> 2^u+L=2^u+M\geq2^{u+1}+2+2^u.$$ Hence
$R_{T,C(n)\setminus (A\cup T(L, n))}(n)-R_{T,D\setminus (B\cup T(L, n))}(n)=0$ for $n\le 2^{u+1}+2+2^u$.
By Lemma \ref{lem2n} with $n=2^{u+1}+2^u+1$ and $N=2^{u+1}+2^u+1$, noting that
$$R_{E_u}(2^{u+1}+2^u+1)=R_{F_u}(2^{u+1}+2^u+1)$$
and
$$R_{A(2^{u+1}+2^u+1)}(2^{u+1}+2^u+1)=R_{B(2^{u+1}+2^u+1)}(2^{u+1}+2^u+1),$$
we have
\begin{eqnarray}\label{k1}0&=&|F_u\setminus (B(2^{u+1}+2^u+1)\cup T(M, 2^{u+1}+2^u+1))|\\
&&-|E_u\setminus (A(2^{u+1}+2^u+1)\cup T(M, 2^{u+1}+2^u+1))|+\chi_{F_u}(2^{u+1}+2^u+1-M).\nonumber
\end{eqnarray}
By Lemma \ref{lem2n} with $n=2^{u+1}+2^u+1$ and $N=2^{u+1}+2^u+2$, noting that
\begin{eqnarray*}
&&R_{B(2^{u+1}+2^u+1)}(2^{u+1}+2^u+2)\\
&=&R_{B(2^{u+1}+2^u+2)}(2^{u+1}+2^u+2)\\
&=&R_{A(2^{u+1}+2^u+2)}(2^{u+1}+2^u+2)\\
&=&R_{A(2^{u+1}+2^u+1)}(2^{u+1}+2^u+2)+\chi_A(2^{u+1}+2^u+2),
\end{eqnarray*}
we have
\begin{eqnarray*}&&R_{F_u}(2^{u+1}+2^u+2)-R_{E_u}(2^{u+1}+2^u+2)\nonumber\\
&=&|F_u\setminus (B(2^{u+1}+2^u+1)\cup T(M, 2^{u+1}+2^u+1))|\nonumber\\
&&-|E_u\setminus (A(2^{u+1}+2^u+1)\cup T(M, 2^{u+1}+2^u+1))|\nonumber\\
&&+\chi_{F_u}(2^{u+1}+2^u+2-M)+\chi_A(2^{u+1}+2^u+2).
\end{eqnarray*}
it follows from \eqref{k1} that
\begin{eqnarray*}&&R_{F_u}(2^{u+1}+2^u+2)-R_{E_u}(2^{u+1}+2^u+2)\\
&=&-\chi_{F_u}(2^{u+1}+2^u+1-M)+\chi_{F_u}(2^{u+1}+2^u+2-M)+\chi_A(2^{u+1}+2^u+2).\nonumber
\end{eqnarray*}
Noting that
$$R_{E_u}(2^{u+1}+2^u+2)=R_{V_u}(2^u)+R_{U_u,V_u}(2^u+1)$$
and
$$R_{F_u}(2^{u+1}+2^u+2)=R_{U_u}(2^u)+R_{U_u,V_u}(2^u+1)+2,$$
it follows from Lemma \ref{lem999999} that
$$R_{F_u}(2^{u+1}+2^u+2)-R_{E_u}(2^{u+1}+2^u+2)=2.$$

If $M=2^{u+1}+1+\sum\limits_{i=c}^{u-1}2^i$, then by
$$\chi_{F_u}(2^{u+1}+2^u+1-M)=\chi_{F_u}(2^c)=1$$
and
$$\chi_{F_u}(2^{u+1}+2^u+2-M)=\chi_{F_u}(2^c+1)=0,$$
we have $\chi_A(2^{u+1}+2^u+2)=3$, a contradiction.

If $M=2^{u+1}+1+\sum\limits_{i=0}^{t}2^i+\sum\limits_{i=t+2}^{u-1}2^i$, then by
$$\chi_{F_u}(2^{u+1}+2^u+1-M)=\chi_{F_u}(2^{t+1}+1)=0$$
and
$$\chi_{F_u}(2^{u+1}+2^u+2-M)=\chi_{F_u}(2^{t+1}+2)=0,$$
we have $\chi_A(2^{u+1}+2^u+2)=2$, a contradiction.

If $r=0$, then $(A-1)\cup (B-1)=\mathbb{N}\setminus \{m-1+mk:k\in \mathbb{N}\}$, $(A-1)\cap (B-1)=\emptyset$ and $R_{A-1}(n)=R_{B-1}(n)$ for every positive integer $n$. By the  proof of the case of $r\geq 1$, there exists a nonnegative integer $v$ such that $m=2^v+1$.

This completes the proof of Lemma \ref{55555}.
\end{proof}

\begin{lem}\label{lemy2} Let $l$ be a nonnegative integer and let

\begin{eqnarray*}
\varepsilon(l)=
\begin{cases}
1          &\mbox{ $l=0$},\\
2^{l-1}&\mbox{ $l\geq1$}.\\
\end{cases}
\end{eqnarray*}
We have

(i) $S_1(l)\cup T_1(l)=\mathbb{N}\setminus\{2^l+(2^l+1)k:k\in \mathbb{N}\},~S_1(l)\cap T_1(l)=\emptyset$
and $R_{S_1(l)}(n)=R_{T_1(l)}(n)$ for every positive integer $n$.

(ii) $(S_1(l)+1)\cup (T_1(l)+1)=\mathbb{N}\setminus\{(2^l+1)k:k\in \mathbb{N}\},~S_1(l)\cap T_1(l)=\emptyset$
and $R_{S_1(l)}(n)=R_{T_1(l)}(n)$ for every positive integer $n$.

(iii) $S_2(l)\cup T_2(l)=\mathbb{N}\setminus\{\varepsilon(l)+(2^{l}+1)k:k\in \mathbb{N}\},~S_2(l)\cap T_2(l)=\emptyset$
and $R_{S_2(l)}(n)=R_{T_2(l)}(n)$ for every positive integer $n$.
\end{lem}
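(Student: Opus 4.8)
The three representation-function identities are the easy half. Each of $S_1(l),T_1(l),S_2(l),T_2(l)$ is of the shape $H_0(h_1,h_2,\ldots)$ or $H_1(h_1,h_2,\ldots)$ with all $h_i$ positive integers, so $R_{S_j(l)}(n)=R_{T_j(l)}(n)$ for every $n\ge 1$ is immediate from Lemma~\ref{lemy1}; for (ii) one moreover uses that replacing a set $C$ by $C+1$ turns every representation $n=s+s'$, $s<s'$, into $n-2=(s-1)+(s'-1)$, $s-1<s'-1$, so $R_{S_1(l)+1}(n)=R_{S_1(l)}(n-2)=R_{T_1(l)}(n-2)=R_{T_1(l)+1}(n)$. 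Hence the substance of the lemma is the description of the unions and the disjointness, and these I would treat uniformly through the structure of the ``digit system'' $(h_i)$.

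Put $N=2^l+1$. In all three cases the sequence $(h_i)$ breaks into a \emph{low block} $h_1,\ldots,h_l$ and a \emph{high block} $h_{l+1},h_{l+2},\ldots$ with $h_{l+j}=2^{j-1}N$ for $j\ge 1$ (for $l=0$ the low block is empty). Consequently every finite subset sum is
\[
\sum_i\varepsilon_i h_i \;=\; a+Nm,\qquad a=\sum_{i\le l}\varepsilon_i h_i,\quad m=\sum_{j\ge 1}\varepsilon_{l+j}2^{j-1}.
\]
The high block is simply $N$ times the ordinary binary system, so $m$ ranges over all of $\mathbb N$ and is recovered uniquely from the digits $\varepsilon_{l+1},\varepsilon_{l+2},\ldots$. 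A routine check (peeling off the last generator of the low block) shows that the set of attainable values of $a$, together with the uniqueness of the representing digits $\varepsilon_1,\ldots,\varepsilon_l$, is as follows: in cases (i)--(ii), where $h_i=2^{i-1}$ for $1\le i\le l$, the low block realises each of $0,1,\ldots,2^l-1$ exactly once; in case (iii) with $l\ge 1$, where $h_i=2^{i-1}$ for $i<l$ and $h_l=2^{l-1}+1$, it realises each element of $\{0,1,\ldots,2^l\}\setminus\{2^{l-1}\}$ exactly once; and for $l=0$ only $a=0$ occurs.

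Now fix $n\ge 0$ and suppose $n=a+Nm$ with $a$ attainable. The decisive remark is that in every case the attainable values of $a$ lie in $[0,2^l]=[0,N-1]$, a \emph{complete} system of residues modulo $N$; therefore $a$ is forced to equal $n\bmod N$ and then $m=(n-a)/N$ is forced as well. Hence $n$ admits a representation $\sum_i\varepsilon_i h_i$ precisely when $n\bmod N$ is an attainable value of $a$, and when it does the representation is \emph{unique} (the low digits are determined by $a$, the high digits by $m$). Uniqueness gives at once $H_0(h_1,h_2,\ldots)\cap H_1(h_1,h_2,\ldots)=\emptyset$, i.e.\ the disjointness in (i)--(iii). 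For the unions, the inattainable residues are: $a\equiv 2^l\pmod N$ in case (i), whence $S_1(l)\cup T_1(l)=\mathbb N\setminus\{2^l+(2^l+1)k:k\in\mathbb N\}$; the single residue $2^{l-1}=\varepsilon(l)$ in case (iii) when $l\ge 1$, whence $S_2(l)\cup T_2(l)=\mathbb N\setminus\{\varepsilon(l)+(2^l+1)k:k\in\mathbb N\}$, while for $l=0$ one gets $2\mathbb N=\mathbb N\setminus\{1+2k:k\in\mathbb N\}$ since $\varepsilon(0)=1$; and (ii) is obtained from (i) by translating by $1$, using $\bigl(\mathbb N\setminus\{2^l+(2^l+1)k\}\bigr)+1=\{1,2,\ldots\}\setminus\{(2^l+1)(k+1):k\ge 0\}=\mathbb N\setminus\{(2^l+1)k:k\in\mathbb N\}$, the term $k=0$ accounting for the omission of $0$. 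The only step that is not completely mechanical is the uniqueness for the low block, in particular excluding any ``carry'' interaction between $h_l$ and $h_{l+1}=2^l+1$; this is exactly what the remark above handles, since the low partial sum never leaves $[0,2^l]$, a transversal of $\mathbb Z/N\mathbb Z$, and so cannot be altered by adding or subtracting a multiple of $N$.
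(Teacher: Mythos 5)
Your proposal is correct and follows essentially the same route as the paper: the equality $R_{S_j(l)}(n)=R_{T_j(l)}(n)$ is obtained by invoking Lemma~\ref{lemy1}, which is exactly the paper's one-line proof. The extra material you provide (the residue-class analysis modulo $N=2^l+1$ giving unique digit representations, hence the stated unions and disjointness, and the shift argument for part (ii)) is a correct spelling-out of the set-theoretic verifications that the paper treats as immediate from the definitions of $S_i(l)$ and $T_i(l)$.
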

\begin{proof}
Lemma \ref{lemy2} follows from Lemma \ref{lemy1} immediately.
\end{proof}

\section{Proof of Theorem \ref{thm2}}

In this section, we always assume that $1<m\le r-1$ and $A$ and $B$
are two sets such that $A\cup B=\mathbb{N}\setminus\{ r+mk :
k=0,1,\dots \}$, $A\cap B=\emptyset$, $0\in A$ and $R_{A}(n)=R_{B}(n)$ for every positive integer $n$.  We will derive a contradiction.

Firstly, we give the following lemmas.

\begin{lem}\label{lema1}  (i)  $r\notin U$ and $r+1\in U$;

(ii) If $2\nmid m$, then $m\notin U$ and $m+r\in U$.

(ii) If $2\mid m$ and $m<r-1$, then $m\notin U$.
\end{lem}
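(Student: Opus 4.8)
The plan is to apply Lemma~\ref{lem3} with $T=\{r+mk:k\in\mathbb{N}\}$, so that $\min T=r$. Since we are in the regime $1<m\le r-1$, we have $r\ge m+1\ge 3$, hence $0\notin T$, and the remaining hypotheses of Lemma~\ref{lem3} ($0\in A$, $A\cup B=\mathbb{N}\setminus T$, $A\cap B=\emptyset$, $R_A(n)=R_B(n)$ for all $n\ge 1$) are exactly the standing assumptions of this section. Lemma~\ref{lem3} then already delivers $r=\min T\notin U$, which is the first half of~(i). For everything else I would evaluate the functional identity of Lemma~\ref{lem3} at three well-chosen values of $n<2r$, exploiting that $T(n)=T\cap[0,n]$ equals $\{r\}$ when $r\le n<r+m$ and equals $\{r,r+m\}$ when $r+m\le n<r+2m$; that $r+m\in T$ forces $\chi_A(r+m)=0$; and that every value of $\chi_U$ and $\chi_A$ lies in $\{0,1\}$.

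For (i), I take $n=r$. Then $T(r)=\{r\}$ and, since $r\ge 3$, the correction term $\varepsilon$ (nonzero only at $n=2r-1$) is $0$, so the identity reads $\chi_U(1)=\chi_U(0)+\chi_A(r+1)-\chi_U(r+1)$, i.e.\ $0=1+\chi_A(r+1)-\chi_U(r+1)$. Since $\chi_A(r+1)\ge 0$ and $\chi_U(r+1)\le 1$, this forces $\chi_U(r+1)=1$ (and, incidentally, $\chi_A(r+1)=0$); hence $r+1\in U$.

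For (ii) and (iii) I evaluate the identity just below $n=r+m$. For (ii) I take $n=r+m-1$: then $m\le r-1$ gives $r<n<2r$ and $n\ne 2r-1$, and $T(n)=\{r\}$, so the identity becomes $\chi_U(m)=\chi_U(m-1)+\chi_A(r+m)-\chi_U(r+m)$; using $\chi_A(r+m)=0$ this gives $\chi_U(r+m)=\chi_U(m-1)-\chi_U(m)\ge 0$. When $2\nmid m$ one has $D_2(m-1)=D_2(m)-1$, so exactly one of $m-1,m$ lies in $U$; combined with $\chi_U(m-1)\ge\chi_U(m)$ this forces $m\notin U$ and $r+m\in U$, which is (ii). For (iii) I take $n=r+m$: since $2\mid m$ and $m<r-1$ we have $r<n<2r-1$, so $\varepsilon=0$ and $T(n)=\{r,r+m\}$, and the identity becomes $\chi_U(m+1)=\chi_U(m)+1+\chi_A(r+m+1)-\chi_U(r+m+1)$. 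Now $D_2(m+1)=D_2(m)+1$ yields $\chi_U(m+1)=1-\chi_U(m)$, whence $-2\chi_U(m)=\chi_A(r+m+1)-\chi_U(r+m+1)\ge -1$; since $\chi_U(m)\in\{0,1\}$ this forces $\chi_U(m)=0$, i.e.\ $m\notin U$, which is (iii).

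I do not expect a genuine obstacle here: the arithmetic inputs $D_2(m-1)=D_2(m)-1$ (for odd $m$) and $D_2(m+1)=D_2(m)+1$ (for even $m$) are immediate from the base-$2$ expansion. The one point that needs care — and the reason part~(iii) must carry the hypothesis $m<r-1$ — is verifying, for each chosen $n$, that $1\le n<2r$, that the correction term $\varepsilon$ vanishes (with $m=r-1$ the choice $n=r+m=2r-1$ would inject $\varepsilon=1$), and that $T(n)$ is exactly the two- or one-element set used above.
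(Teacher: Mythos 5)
Your proposal is correct and follows essentially the same route as the paper: Lemma~\ref{lem3} (with $T=\{r+mk:k\in\mathbb{N}\}$, $\min T=r$) gives $r\notin U$, and evaluating its identity at $n=r$, $n=r+m-1$ and $n=r+m$ — with the same parity facts $D_2(m\mp1)=D_2(m)\mp1$, the vanishing of $\chi_A(r+m)$, and the checks that $\varepsilon=0$ and $T(n)$ is $\{r\}$ or $\{r,r+m\}$ — yields (i), (ii) and (iii) exactly as in the paper's proof.
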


\begin{proof}  (i) By Lemma \ref{lem3}, we have $r\notin U$.
In view of Lemma \ref{lem3} with $n=r$, we have
$$\chi_{U}(1)=\chi_{U}(0)+\chi_A (r+1)-\chi_{U}(r+1).$$
Noting that $0\in U$ and $1\notin U$, we have
$$1=\chi_U (r+1)-\chi_{A}(r+1).$$
Hence, $r+1\notin A$ and $r+1\in U$.

(ii) If $2\nmid m$, then $\chi_{U}(m)+\chi_{U}(m-1)=1$. By
Lemma \ref{lem3} with $n=r+m-1$, we have
$$\chi_{U}(m)=\chi_{U}(m-1)+\chi_{A}(r+m)-\chi_{U}(r+m).$$
Noting that $r+m\notin A$, we have
\begin{eqnarray*}1&=&\chi_{U}(m)+\chi_{U}(m-1)\\
&=&2\chi_{U}(m-1)+\chi_{A}(r+m)-\chi_{U}(r+m)\\
&=& 2\chi_{U}(m-1)-\chi_{U}(r+m).\end{eqnarray*}
It follows that $\chi_{U}(m-1)=\chi_{U}(r+m)=1$. Hence $m\notin U, r+m\in U$.

(iii) If $2\mid m$, then $\chi_{U}(m)+\chi_{U}(m+1)=1$. By
$m<r-1$ and Lemma \ref{lem3} with $n=r+m$ we have
$$\chi_{U}(m+1)+\chi_{U}(1)=\chi_{U}(m)+\chi_{U}(0)+\chi_{A}(r+m+1)-\chi_{U}(r+m+1).$$
Noting that $0\in U$ and $1\notin U$, we have
\begin{eqnarray*}1&=&\chi_{U}(m)+\chi_{U}(m+1)+\chi_{U}(1)\\
&=&2\chi_{U}(m)+\chi_{U}(0)+\chi_{A}(r+m+1)-\chi_{U}(r+m+1)\\
&=&2\chi_{U}(m)+1+\chi_{A}(r+m+1)-\chi_{U}(r+m+1).\end{eqnarray*} That is,
$$-2\chi_{U}(m)=\chi_{A}(r+m+1)-\chi_{U}(r+m+1).$$
This implies that $\chi_{U}(m)=0$. Hence $m\notin U$.

This completes the proof of Lemma \ref{lema1}.
\end{proof}

\begin{lem}\label{lema2} If $k$ is a positive integer such that $r+2^k+1\in
U$, then  $r\ge 2^k-1$ and $m\le 2^k$. \end{lem}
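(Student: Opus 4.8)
The plan is to apply Lemma~\ref{lem3} at the point $n = r + 2^k$, exactly as was done in the proof of Lemma~\ref{lema1}, but now exploiting the hypothesis $r+2^k+1\in U$ together with the known fact $r+1\in U$ from Lemma~\ref{lema1}(i). First I would observe that since $r+2^k+1 \in U$ and $r+1 \in U$, the binary digit pattern of $r$ near bit position $k$ is constrained: writing $r$ in binary, passing from $r+1$ to $r+2^k+1$ by adding $2^k$ keeps the digit-sum parity even only if $D_2(r+2^k+1) = D_2(r+1)$, which forces (given $0 \le n < 2^k$ implies $D_2(n+2^k) = 1 + D_2(n)$, the identity recalled in the introduction) that bit $k$ of $r+1$ is already $1$ and adding $2^k$ causes a carry, or more carefully, that $r+1 \ge 2^k$ in a suitable sense. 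Quantitatively, I expect this to yield $r \ge 2^k - 1$ directly from digit considerations — if $r < 2^k - 1$ then $r + 1 \le 2^k - 1 < 2^k$, so $D_2(r + 2^k + 1) = 1 + D_2(r+1)$, contradicting that both $r+1$ and $r+2^k+1$ lie in $U$ (same parity). This handles the first conclusion $r \ge 2^k$... wait, $r \ge 2^k - 1$.

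For the bound $m \le 2^k$, I would argue by contradiction: suppose $m \ge 2^k + 1$. The idea is that then $r + 2^k < r + m$, so the "forbidden" point $r+m$ does not yet interfere, and we may legitimately invoke Lemma~\ref{lem3} at $n = r + 2^k$ (we need $n < 2\min T = 2r$, which holds since $2^k \le m - 1 \le r - 2 < r$ using the standing assumption $m \le r - 1$, actually $m < r$). Lemma~\ref{lem3} with $n = r + 2^k$ gives
\[
\sum_{t \in T(n)} \chi_U(n - t + 1) = \sum_{t \in T(n)} \chi_U(n - t) + \chi_A(n+1) - \chi_U(n+1) - \varepsilon,
\]
and since $2^k < r$ we have $n = r + 2^k < 2r$, so $T(n) = \{r\}$ (the next element of $T$ is $r + m \ge r + 2^k + 1 > n$), and $\varepsilon = 0$ unless $n = 2r - 1$, which is excluded. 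Thus the identity collapses to
\[
\chi_U(2^k + 1) = \chi_U(2^k) + \chi_A(r + 2^k + 1) - \chi_U(r + 2^k + 1).
\]
Now $\chi_U(2^k) = 0$ (one digit), $\chi_U(2^k + 1) = 1$ (two digits), and by hypothesis $\chi_U(r + 2^k + 1) = 1$; substituting gives $1 = 0 + \chi_A(r+2^k+1) - 1$, hence $\chi_A(r + 2^k + 1) = 2$, which is impossible since $\chi_A$ takes values in $\{0,1\}$. This contradiction forces $m \le 2^k$.

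The main obstacle I anticipate is bookkeeping the exact range conditions so that Lemma~\ref{lem3} legitimately applies with $T(n) = \{r\}$ and $\varepsilon = 0$: one must verify $r + 2^k < 2r$ (equivalently $2^k < r$, which follows from $2^k \le m - 1$ and $m \le r - 1$, so $2^k \le r - 2 < r$) and that $r + 2^k \ne 2r - 1$, i.e. $2^k \ne r - 1$; if $2^k = r - 1$ the $\varepsilon = 1$ term appears and the computation must be redone, but then $r = 2^k + 1 \ge 2^k - 1$ is immediate and a short separate check (or noting $r + 2^k + 1 = 2^{k+1} + 2 \notin U$ when $k \ge 1$, contradicting the hypothesis) disposes of this edge case. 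Finally, for the conclusion $r \ge 2^k - 1$ I would simply run the digit-parity argument sketched above independently of the value of $m$, so that part needs no range hypothesis at all.
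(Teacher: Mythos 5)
Your proposal is correct and takes essentially the same route as the paper: the identical digit-parity argument from $r+1\in U$ gives $r\ge 2^k-1$, and the identical application of Lemma \ref{lem3} at $n=r+2^k$ (where $T(n)=\{r\}$ and $\varepsilon=0$) yields $\chi_A(r+2^k+1)=2$, a contradiction, exactly as in the paper, which notes $r+2^k<r+m\le 2r-1$. The only blemish is the aside on the edge case $2^k=r-1$: the claim that $r+2^k+1=2^{k+1}+2\notin U$ is false (its digit sum is $2$), but this aside is moot because your own inequality $2^k\le m-1\le r-2$ already excludes $n=2r-1$.
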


\begin{proof} By Lemma \ref{lema1}, we have $r+1\in
U$. If $r< 2^k-1$, then $D_2(2^k+r+1)=1+D_2(r+1)$ is odd. So $r+2^k+1\notin U$, a contradiction. Hence, $r\ge 2^k-1$.

We prove $m\le 2^k$ by contradiction. Suppose that $m\ge 2^k+1$.
Then $r+2^k<r+m\le 2r-1$. By Lemma \ref{lem3} with $n=r+2^k$, we have
$$\chi_{U}(2^{k}+1)=\chi_{U}(2^{k})+\chi_{A}(r+2^k+1)-\chi_{U}(r+2^k+1).$$
Noting that $2^{k}+1\in U$, $r+2^k+1\in U$ and $2^{k}\notin U$, we have
$$\chi_{A}(r+2^k+1)=2,
$$
a contradiction. Hence, $m\le 2^k$.

This completes the proof of Lemma \ref{lema2}.
\end{proof}

\begin{lem}\label{2x} Let $u$ and $m$ be two positive integers with $m=2^u$. Then $2^u< r<2^{u+1}$.\end{lem}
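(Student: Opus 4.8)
The lower bound is free: the standing hypothesis $m\le r-1$ gives $r\ge m+1=2^u+1>2^u$. So everything is in the upper bound $r<2^{u+1}$, and I would argue by contradiction: assume $r\ge 2^{u+1}$. Note that then $r\ge 2^{u+1}\ge 2^u+2=m+2$ (using $u\ge 1$), so in particular $m<r-1$ and all of Lemma~\ref{lema1} is at our disposal; from Lemma~\ref{lema1}(i) we have $r+1\in U$, hence $D_2(r+1)$ is even, and since $r+1\ge 2^{u+1}+1$ we get $D_2(r+1)\ge 2$.

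\textbf{Binary structure of $r+1$.} The engine is Lemma~\ref{lema2}: because $m=2^u$, for every $k$ with $1\le k\le u-1$ we \emph{cannot} have $r+2^k+1\in U$ (that would force $2^u=m\le 2^k$, impossible). I would rewrite $r+2^k+1=(r+1)+2^k$ and track digit sums: writing $r+1=\sum_i b_i2^i$, adding $2^k$ changes $D_2$ by $1-\ell_k$, where $\ell_k$ is the length of the maximal block of $1$'s of $r+1$ starting at position $k$ (so $\ell_k=0$ exactly when $b_k=0$, and then $D_2$ grows by $1$). Since $D_2(r+1)$ is even, the condition ``$r+2^k+1\notin U$'' forces, for each $k\in[1,u-1]$ with $b_k=1$, that $\ell_k$ is even (hence $\ell_k\ge 2$). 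Taking the smallest index $j\in[1,u-1]$ with $b_j=1$ and then also looking at $k=j+1$ (whose block length is $\ell_j-1$, odd) yields a contradiction unless $j=u-1$. So the digits of $r+1$ satisfy $b_1=\dots=b_{u-2}=0$, and if $b_{u-1}=1$ then also $b_u=1$ with the $1$-block through positions $u-1,u$ of even length. In particular $r+1\equiv b_0+b_{u-1}2^{u-1}\pmod{2^u}$, which is small compared with $r+1\ge 2^{u+1}+1$, so the ``mass'' of $r+1$ sits in digits at positions $\ge u$.

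\textbf{Finishing, and the hard part.} With this information I would feed a well-chosen $n$ with $r\le n<2r$ into the identity of Lemma~\ref{lem3}: for such $n$ the set $T(n)$ is just $\{r\}$ (or $\{r,r+m\}$), so the identity collapses to a short alternating sum of values $\chi_U(\cdot)$ together with $\chi_A(n+1)$. Choosing $n$ so that these $\chi_U$'s are evaluated at arguments such as $2^{u}\pm1$, $2^{u-1}\pm1$, or $(r+1)+(\text{small shift})$ — whose digit-sum parities are now pinned down by the structure above — one arranges the identity to read $\chi_A(n+1)=2$ or $\chi_A(n+1)=-1$, which is absurd. The small exponents $u=1$ and $u=2$ (where Lemma~\ref{lema2} is vacuous or nearly so) I would dispatch directly, running Lemma~\ref{lem3} at $n=r,\,r+1,\,r+2,\,r+3$ and using $(\chi_U(0),\dots,\chi_U(4))=(1,0,0,1,0)$ together with $r+m\in T$ (so $\chi_A(r+m)=0$). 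The genuine obstacle is exactly this last step: the structural dichotomy still leaves several sub-cases — most stubbornly the one where the low digits of $r+1$ all vanish, e.g.\ $r=2^{u+1}$, for which Lemma~\ref{lema2} says nothing at all — and for each of these one must locate the ``right'' $n$ in $[r,2r)$; this is the bookkeeping-heavy case chase reminiscent of the proof of Lemma~\ref{55555}. A minor but real nuisance throughout is the term $\varepsilon=1$ in Lemma~\ref{lem3} occurring at $n=2r-1$, which has to be either steered around or deliberately used, depending on how close $r$ is to $2^{u+1}$.
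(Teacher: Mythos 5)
There is a genuine gap: the whole content of the lemma is the upper bound $r<2^{u+1}$, and your proposal does not actually prove it. Your structural analysis of $r+1$ via Lemma~\ref{lema2} is correct as far as it goes, but it says nothing precisely in the cases you yourself flag as "stubborn" (all low digits of $r+1$ vanishing, e.g.\ $r=2^{u+1}$, or more generally $r$ equal to, or near, a power of two $2^{k_1}$ with $k_1>u$), and for those cases you only promise that some "well-chosen $n\in[r,2r)$" fed into Lemma~\ref{lem3} will yield $\chi_A(n+1)\in\{-1,2\}$, without exhibiting it. That is exactly the part the paper has to work hardest for, and its route suggests your plan as stated cannot close the gap: first, your claim that for $n\in[r,2r)$ one has $T(n)=\{r\}$ or $\{r,r+m\}$ is false under the contradiction hypothesis, since $m=2^u\le r/2$ makes $r+2m\le 2r-1$, so $T(n)$ can contain on the order of $2^{k_1-u}$ terms; the paper exploits precisely this, applying Lemma~\ref{lem3} at $n=r+2^{k_1}-1$ and evaluating the full sums $\sum_i\chi_U(2^{k_1}-i2^u)$ and $\sum_i\chi_U(2^{k_1}-1-i2^u)$ by a digit-sum/binomial-coefficient count to force $\varepsilon=1$, i.e.\ $r=2^{k_1}$. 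Second, in that remaining case $r=2^{k_1}$ the contradiction is not obtained inside $[r,2r)$ at all: the paper must first deduce $2\mid u$, determine $A(r+2m)$ and $B(r+2m)$ explicitly, and then invoke Lemma~\ref{lem2} with the comparison sets $C=E_{k_1}$, $D=F_{k_1}$ of Lemma~\ref{lem1w} and $L=r+m$, reaching the contradiction $\chi_A(2r+2^u+2)=2$ at $n=2r+2^u+1>2r$. None of these ingredients (the counting identity over the full progression, the explicit reconstruction of $A$ and $B$ up to $r+2m$, the use of $E_{k_1},F_{k_1}$ beyond $2r$) appear in your sketch, so the decisive step remains unproved rather than merely tedious.
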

\begin{proof}By $m\leq r-1$, we have
$2^u< r$. Let $k_1$ be the integer with $2^{k_1}\leq r<2^{k_1+1}$.
Then $k_1\ge u$. In view of $r\not\in
U$ and $D_2(2^{k_1}+r)=D_2(r)$, we have $2^{k_1}+r\not\in
U$. Suppose that $k_1>u$. By Lemma \ref{lem3} with
$n=r+2^{k_1}-1$,
\begin{eqnarray*} \sum_{r\le r+im\le n }  \chi_{U}
(n-r-im+1)
 =\sum_{r\le r+im\le n} \chi_{U} (n-r-im)+\chi_A
(n+1)-\chi_{U}(n+1)-\varepsilon,
\end{eqnarray*}
where $\varepsilon=1$ if $n=2r-1$, otherwise $\varepsilon=0$.
Noting that
\begin{eqnarray}\label{leqn5}
&&\sum_{0\le i\le 2^{k_1-u}-1} \chi_{U} (2^{k_1}-i 2^u)\nonumber\\
&=&\sum_{1\le i\le 2^{k_1-u}} \chi_{U} (2^{k_1}-i2^u)-1\nonumber\\
&=&\sum_{1\le i\le 2^{k_1-u}} \chi_{U} (2^{k_1-u}-i)-1\nonumber\\
&=&\sum_{0\le i\le 2^{k_1-u}-1} \chi_{U} (2^{k_1-u}-1-i)-1,
\end{eqnarray}
it follows from $2^{k_1}+r\not\in A$ and $2^{k_1}+r\not\in
U$ that
\begin{eqnarray}\label{eqn5}
&&\sum_{0\le i\le 2^{k_1-u}-1} \chi_{U} (2^{k_1-u}-1-i)-1\nonumber\\
&=&\sum_{0\le i\le 2^{k_1-u}-1 }  \chi_{U} (2^{k_1}-1-i
2^u)-\varepsilon.
\end{eqnarray}
Since
$$D_2(2^{k_1}-1-i 2^u)=k_1-D_2(i), \quad 0\le i\le 2^{k_1-u}-1,$$
$$D_2(2^{k_1-u}-1-i)=k_1-u-D_2(i), \quad
0\le i\le 2^{k_1-u}-1 $$ and
\begin{eqnarray*}&& |\{ i : 0\le i\le 2^{k_1-u}-1, 2\mid D_2(i)\} |\\
&=&\sum_{0\le 2i\le k_1-u} \binom{k_1-u}{2i}\\
&=&\sum_{0\le 2i+1\le k_1-u} \binom{k_1-u}{2i+1}\\
&=&|\{ i : 0\le i\le 2^{k_1-u}-1, 2\nmid D_2(i)\}
|,\end{eqnarray*} it follows that
\begin{eqnarray}\label{eqref1}&&\sum_{0\le i\le 2^{k_1-u}-1 }  \chi_{U}
(2^{k_1}-1-i 2^u)\nonumber\\
&=& |\{ i : 0\le i\le 2^{k_1-u}, D_2(i)\equiv k_1\pmod 2 \} |\nonumber\\
&=& |\{ i : 0\le i\le 2^{k_1-u}, D_2(i)\equiv k_1-u\pmod 2 \} |\nonumber\\
&=& \sum_{0\le i\le 2^{k_1-u}-1} \chi_{U} (2^{k_1-u}-1-i).
\end{eqnarray}
Thus, by \eqref{eqn5}, we have $\varepsilon=1$.  So $r=2^{k_1}$. By Lemma \ref{lem3} with $n=r+m-1=2^{k_1}+2^u-1$, we have
$$\chi_{U}(2^u)=\chi_{U}(2^u-1)+\chi_{A}(2^{k_1}+2^u)-\chi_{U}(2^{k_1}+2^u).$$
Noting that $2^u\not\in U$, $2^{k_1}+2^u\in U$ and $2^{k_1}+2^u\notin A$, we have
$\chi_{U}(2^u-1)=1$. Hence, $2\mid u$. So $u\geq2$.
For any integer $s$ with $0\leq s\leq m-2$, by Lemma \ref{lem3} with $n=r+s$, we have
$$\chi_{U}(s+1)=\chi_{U}(s)+\chi_{A}(r+s+1)-\chi_{U}(r+s+1).$$
Noting that $$\chi_{U}(r+s+1)=\chi_{V}(s+1)=1-\chi_{U}(s+1),$$
we have
\begin{equation}\label{eqabc0}\chi_{U}(s)+\chi_{A}(r+s+1)=1.
\end{equation}
For any integer $s$ with $0\leq s\leq m-2$, by Lemma \ref{lem3} with $n=r+m+s$, we have
$$\chi_{U}(m+s+1)+\chi_{U}(s+1)=\chi_{U}(m+s)+\chi_{U}(s)+\chi_{A}(r+m+s+1)-\chi_{U}(r+m+s+1).$$
Noting that $$\chi_{U}(m+s+1)+\chi_{U}(s+1)=1$$
and $$\chi_{U}(m+s)+\chi_{U}(s)=1,$$
we have
\begin{equation}\label{eqabc2}
\chi_U(s+1)=\chi_A(r+m+s+1).
\end{equation}
By \eqref{eqabc0} and \eqref{eqabc2} and $2\mid u$, we have
$$A(r+2m)=(U_{k_1}\cup(2^{k_1}+1+V_{u})\cup(2^{k_1}+2^u+U_{u}))\setminus\{2^{k_1}+2^u\},$$
$$B(r+2m)=(V_{k_1}\cup(2^{k_1}+1+U_{u})\cup(2^{k_1}+2^u+V_{u}))\setminus\{2^{k_1}+2^u\}.$$
Now we employ Lemma
\ref{lem2} with $C=E_{k_1}$, $D=F_{k_1}$ and $L=r+m$, where $E_{k_1}$ and $F_{k_1}$ are as
in Lemma \ref{lem1w}. Noting that $2^{k_1}+2^u+1\in B$, $2^{k_1}+2^u+1\in E_{k_1}$, $2^{k_1}+2^u+2\in B$ and $2^{k_1}+2^u+2\in F_{k_1}$. Hence
$$R_{T,C\setminus (A\cup T(L,2r+2^u+1)}(2r+2^u+1)-R_{T,D\setminus (B\cup T(L,2r+2^u+1))}(2r+2^u+1)=1$$
and
$$R_{T,C\setminus (A\cup T(L,2r+2^u+1)}(2r+2^u+2)-R_{T,D\setminus (B\cup T(L,2r+2^u+1))}(2r+2^u+2)=0.$$

In view of Lemma \ref{lem2} with $n=2r+2^u+1$, we have
\begin{eqnarray}\label{eq16a}&&1+R_{F_{k_1}, T(r+m, 2r+2^u+1)\cap F_{k_1} }(2r+2^u+1)-R_{E_{k_1}, T(r+m, 2r+2^u+1)\cap
E_{k_1} }(2r+2^u+1)\nonumber\\
&=&R_{F_{k_1}, T(r+m, 2r+2^u+1)\cap F_{k_1}}(2r+2^u+1)-R_{E_{k_1}, T(r+m, 2r+2^u+1)\cap E_{k_1} }(2r+2^u+1)\nonumber\\
&&+\chi_A (2r+2^u+2)-\chi_{E_{k_1}}(2r+2^u+2).
\end{eqnarray}
Noting that $u\geq2$, we have
\begin{eqnarray*}&& R_{F_{k_1}, T(r+m,2r+2^u+1)\cap
F_{k_1} }(2r+2^u+1)-R_{E_{k_1}, T(r+m,2r+2^u+1)\cap E_{k_1} }(2r+2^u+1)\\
 &=& \sum_{t\in T(r+m,2r+2^u+1)} \chi_{F_{k_1}} (2r+2^u+1-t)\chi_{F_{k_1}}
 (t)\\
 &&-\sum_{t\in T(r+m,2r+2^u+1)} \chi_{E_{k_1}} (2r+2^u+1-t)\chi_{E_{k_1}}
 (t)\\
&=& \sum_{t\in T(r+m,2r+2^u+1)}(1- \chi_{E_{k_1}} (2r+2^u+1-t))(1-\chi_{E_{k_1}}(t))\\
&&-\sum_{t\in T(r+m,2r+2^u+1)} \chi_{E_{k_1}} (2r+2^u+1-t)\chi_{E_{k_1}}
 (t)\\
&=& \sum_{t\in T(r+m,2r+2^u+1)} 1-(\chi_{E_{k_1}} (2r+2^u+1-t)+\chi_{E_{k_1}}
 (t)).
\end{eqnarray*}
Similarly,
\begin{eqnarray*}&&
R_{F_{k_1}, T(r+m,2r+2^u+1)\cap
F_{k_1} }(2r+2^u+2)-R_{E_{k_1}, T(r+m,2r+2^u+1)\cap E_{k_1} }(2r+2^u+2)\\
&=& \sum_{t\in T(r+m,2r+2^u+1)} 1-(\chi_{E_{k_1}} (2r+2^u+2-t)+\chi_{E_{k_1}}
 (t)).
\end{eqnarray*}
It follows from \eqref{eq16a} that
\begin{eqnarray*}&&1+\sum_{t\in T(r+m,2r+2^u+1)} \chi_{E_{k_1}} (2r+2^u+2-t)\nonumber\\
&=&\sum_{t\in T(r+m,2r+2^u+1)}\chi_{E_{k_1}} (2r+2^u+1-t)+\chi_A (2r+2^u+2)-\chi_{E_{k_1}}(2r+2^u+2),
\end{eqnarray*}
that is
\begin{eqnarray*}&&1+\sum\limits_{i=0}^{2^{k_1-u}} \chi_{E_{k_1}} (2^{k_1}-i2^u+2)\nonumber\\
&=&\sum\limits_{i=0}^{2^{k_1-u}} \chi_{E_{k_1}} (2^{k_1}-i2^u+1)+\chi_A (2r+2^u+2)-\chi_{E_{k_1}}(2r+2^u+2),
\end{eqnarray*}
which implies that
\begin{eqnarray*}&&1+\chi_{E_{k_1}} (2^{k_1}+2)\nonumber\\
&=&\chi_{E_{k_1}} (2^{k_1}+1)+\chi_A (2r+2^u+2)-\chi_{E_{k_1}}(2r+2^u+2).
\end{eqnarray*}
Noting that $\chi_{E_{k_1}} (2^{k_1}+2)=1$ and $\chi_{E_{k_1}} (2^{k_1}+1)=\chi_{E_{k_1}}(2r+2^u+2)=0$, we have $\chi_A (2r+2^u+2)=2$, a contradiction.
Hence, $u=k_1$. Noting that $m\leq r-1$, we have $2^u< r<2^{u+1}$.

This completes the proof of Lemma \ref{2x}.
\end{proof}

\begin{lem}\label{lema4} If $d$ is a positive integer with $m+2^d< r-1$, $m+2^d\notin U$, and $m\not=2^d$,
then $m$ is odd, $m+2^d+1\notin U$ and $r+m+2^d+1\notin U$.\end{lem}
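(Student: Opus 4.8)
The plan is to deduce all three conclusions from a single application of Lemma~\ref{lem3} at the point $n=r+m+2^d$, once the size of $m$ relative to $2^d$ has been pinned down.

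First I would show $m>2^d$. Since $d\ge1$ and $m+2^d<r-1$ give $m<r-1$, the two parts of Lemma~\ref{lema1} treating odd and even $m$ both yield $m\notin U$, i.e.\ $D_{2}(m)$ is odd. If we had $m<2^d$, then every binary digit of $m$ would lie strictly below position $d$, so $D_{2}(m+2^d)=D_{2}(m)+1$ would be even and $m+2^d$ would lie in $U$, contradicting the hypothesis $m+2^d\notin U$; together with $m\ne2^d$ this forces $m>2^d$. Consequently $r+2m>r+m+2^d$, so $T\cap[0,\,r+m+2^d]=\{r,\,r+m\}$ — the excluded set $T$ is ``trivial'' on the relevant range.

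Now I would invoke Lemma~\ref{lem3} with $n=r+m+2^d$. This is legitimate because $n<2r-1$ (as $m+2^d<r-1$), so the correction term $\varepsilon$ is $0$, and the sums run only over $t\in\{r,r+m\}$, i.e.\ over the arguments $n-r=m+2^d$ and $n-(r+m)=2^d$. Substituting $\chi_{U}(m+2^d)=0$ (hypothesis), $\chi_{U}(2^d)=0$ (since $D_{2}(2^d)=1$), and $\chi_{U}(2^d+1)=1$ (since $D_{2}(2^d+1)=2$, which uses $d\ge1$), the identity of Lemma~\ref{lem3} collapses to
\[
\chi_{U}(m+2^d+1)+1=\chi_{A}(r+m+2^d+1)-\chi_{U}(r+m+2^d+1).
\]
The right-hand side is at most $1$ while the left-hand side is at least $1$, so both equal $1$; this forces $\chi_{U}(m+2^d+1)=0$ and $\chi_{U}(r+m+2^d+1)=0$, i.e.\ $m+2^d+1\notin U$ and $r+m+2^d+1\notin U$. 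Finally, if $m$ were even then $m+2^d$ would be even (as $d\ge1$), hence would end in the digit $0$, giving $D_{2}(m+2^d+1)=D_{2}(m+2^d)+1$; since $m+2^d\notin U$ makes $D_{2}(m+2^d)$ odd, we would get $m+2^d+1\in U$, contradicting what was just proved — so $m$ is odd.

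The only step that is not a mechanical Thue--Morse digit computation is the reduction $m>2^d$: it relies on combining $m\notin U$ (from Lemma~\ref{lema1}) with the hypotheses $m+2^d\notin U$ and $m\ne2^d$, and it is exactly this that lets $T$ be treated as a two-element set near $r+m+2^d$ and Lemma~\ref{lem3} be used in its simplest form. I expect that recognition to be the main obstacle; everything after it mirrors the arguments already carried out in Lemmas~\ref{lema1}--\ref{2x}.
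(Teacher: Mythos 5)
Your proposal is correct and follows essentially the same route as the paper: establish $m>2^d$ from $m\notin U$ (Lemma~\ref{lema1}) together with $m+2^d\notin U$ and $m\ne 2^d$, then apply Lemma~\ref{lem3} at $n=r+m+2^d<2r-1$ (so the sum runs only over $t\in\{r,r+m\}$ and $\varepsilon=0$) and read off $\chi_U(m+2^d+1)=\chi_U(r+m+2^d+1)=0$, with the parity of $m$ settled by the same digit argument. The only difference is cosmetic: you make explicit the bound $\chi_A-\chi_U\le 1$ versus $\chi_U(m+2^d+1)+1\ge 1$, which the paper leaves implicit.
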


\begin{proof} By Lemma \ref{lema1}, $m\notin U$. If
$m<2^d$, then $D_2(m+2^d)=D_2(m)+1$ is even, a contradiction with
$m+2^d\notin U$. So $m\ge 2^d$. Noting that $m\not= 2^d$, we have
$m>2^d$, and so $m+2^d<2m$. In view of Lemma \ref{lem3} with $n=r+m+2^d< 2r-1$, we have
$$\chi_{U}(m+2^d+1)+\chi_{U}(2^d+1)
=\chi_{U}(m+2^d)+\chi_{U}(2^d) +\chi_A
(r+m+2^d+1)-\chi_{U}(r+m+2^d+1).$$
 Noting that $m+2^d,2^d\notin U$ and $2^d+1\in U$, we have
$$\chi_{U}(m+2^d+1)=0,~~\chi_{U}(r+m+2^d+1)=0.$$
If $m$ is even, then by $m+2^d\notin
U$, we have $m+2^d+1\in U$, a contradiction.
Hence $m$ is odd.

This completes the proof of Lemma \ref{lema4}.
\end{proof}

\begin{proof}[Proof of Theorem \ref{thm2}] Assume that $2\leq m\leq r-1$. We will consider the following two cases.

{\bf Case 1:} $2\nmid r$. Let
$$r=\sum\limits_{i=0}^{j}2^i+2^{j+2}R$$
with $j\ge 0$ and $j,R\in \mathbb{N}$. By $r\notin U$ and $r+1\in
U$, we have $2\nmid
 j$ and $2\nmid D_2(R)$. Hence $D_2(R)\geq1$. Since $$r+1+2^{j+1}=2^{j+2}+2^{j+2}R,\quad
 r+1+2^{j+2}=2^{j+1}+2^{j+2}+2^{j+2}R,$$
there exists integer $k\in \{ j+1, j+2\} $ such that $r+1+2^k\in U$. By
Lemma \ref{lema2}, $r\ge 2^k-1$ and $m\le 2^k$. Let $u$ be a positive integer with $2^u\leq m< 2^{u+1}$. Then $u\leq k$.  If $m=2^u$, then by Lemma \ref{2x}, we have $2^{u}< r<2^{u+1}$. Noting that $R\geq1$, we have $r\geq2^k$, and so $k< u+1$. So $k=u$. It follows from $R\geq1$ that $k=j+2$, which implies that $m=2^{j+2}$ and $r=\sum\limits_{i=0}^{j}2^i+2^{j+2}$.

For any integer $0\leq t\leq m-1$, by Lemma \ref{lem3} with $n=r+t$, we have
\begin{equation}\label{a1}\chi_{U}(t+1)=\chi_{U}(t)+\chi_{A}(r+t+1)-\chi_{U}(r+t+1).
\end{equation}
If $0\leq t\leq 2^{j+1}-1$, then $D_2(r+t+1)=D_2(t)$. If $2^{j+1}\leq t\leq m-1$, then $D_2(r+t+1)=D_2(t)$.
Hence, for any integer $0\leq t\leq m-1$, we have $\chi_{U}(t)-\chi_{U}(r+t+1)=0$, it follows from \eqref{a1} that
\begin{equation}\label{a2}\chi_{U}(t+1)=\chi_{A}(r+t+1).
\end{equation}
For any integer $0\leq s\leq 2^{j+1}-2$, by Lemma \ref{lem3} with $n=r+m+s$, we have
\begin{equation}\label{a3}\chi_{U}(m+s+1)+\chi_{U}(s+1)=\chi_{U}(m+s)+\chi_{U}(s)+\chi_{A}(r+m+s+1)-\chi_{U}(r+m+s+1)-\varepsilon,
\end{equation}
where $\varepsilon=1$ if $n=r+m+2^{j+1}-2=2r-1$, otherwise $\varepsilon=0$.
Since $\chi_{U}(m+s+1)+\chi_{U}(s+1)=1$ and $\chi_{U}(m+s)+\chi_{U}(s)=1$, it follows from \eqref{a3} that $\chi_{A}(2r)=1$ and
\begin{equation}\label{a4}\chi_{A}(r+m+s+1)=\chi_{U}(r+m+s+1)=\chi_{U}(s),~~~\text{if} ~~~0\leq s\leq 2^{j+1}-3.
\end{equation}
It follows from \eqref{a2}, \eqref{a4} and $2\nmid j$ that
$$A(2r)=(U_{j+2}\cup(2^{j+2}+V_{j+1})\cup(r+U_{j+2})\cup(r+m+1+U_{j+1})\cup\{2r\})\setminus\{r,2r+1\},$$
$$B(2r)=(V_{j+2}\cup(2^{j+2}+U_{j+1})\cup(r+V_{j+2})\cup(r+m+1+V_{j+1}))\setminus\{r,2r\}.$$
Since
\begin{eqnarray*}
&&R_{A(2r)}(2r+1)\\
&=&R_{U_{j+2},U_{j+2}}(2^{j+2}+2^{j+1})+R_{U_{j+2},V_{j+1}}(2^{j+1})+R_{U_{j+2}}(1)\\
&&-\chi_{U_{j+2}}(1)+R_{U_{j+2},U_{j+1}}(2^{j+1}-1)-\chi_{U_{j+2}}(0)\\
&=&2R_{U_{j+2}}(2^{j+2}+2^{j+1})+\chi_{U_{j+2}}(2^{j+1}+2^{j})+R_{U_{j+1},V_{j+1}}(2^{j+1})+2R_{V_{j+1}}(0)\\
&&+\chi_{V_{j+1}}(0)+R_{U_{j+2}}(1)-\chi_{U_{j+2}}(1)+2R_{U_{j+1}}(2^{j+1}-1)-\chi_{U_{j+2}}(0)\\
&=&2R_{U_{j+2}}(2^{j+2}+2^{j+1})+1+R_{U_{j+1},V_{j+1}}(2^{j+1})+2R_{U_{j+1}}(2^{j+1}-1)-1\\
&=&2R_{U_{j+2}}(2^{j+2}+2^{j+1})+R_{U_{j+1},V_{j+1}}(2^{j+1})+2R_{U_{j+1}}(2^{j+1}-1),
\end{eqnarray*}
\begin{eqnarray*}
&&R_{B(2r)}(2r+1)\\
&=&R_{V_{j+2},V_{j+2}}(2^{j+2}+2^{j+1})+R_{V_{j+2},U_{j+1}}(2^{j+1})+R_{V_{j+2}}(1)\\
&&-\chi_{V_{j+2}}(1)+R_{V_{j+2},V_{j+1}}(2^{j+1}-1)-1\\
&=&2R_{V_{j+2}}(2^{j+2}+2^{j+1})+R_{V_{j+1},U_{j+1}}(2^{j+1})+2R_{V_{j+1}}(2^{j+1}-1)-1,
\end{eqnarray*}
Hence, by Lemma \ref{lem999999}, we have
$$R_{A(2r)}(2r+1)-R_{B(2r)}(2r+1)=1,$$
it follows that
$$0=R_{A}(2r+1)-R_{B}(2r+1)=R_{A(2r)}(2r+1)-R_{B(2r)}(2r+1)+\chi_A(2r+1)=1+\chi_A(2r+1),$$
a contradiction.

Hence, $m$ is  not a power of 2, we have $m<2^k$. Then $u\leq k-1$. If $m$ is odd, then let
$$m=\sum_{i=0}^d 2^i +2^{d+2}M$$ with $d\ge 0$ and $d,M\in
\mathbb{N}$. By Lemma \ref{lema1}, $m\notin U$, $r+m\in U$. Noting that $m\leq r-1$, $m$ and $r$ are both odds, we have $m\leq r-2$.
Noting that $2\mid r+m$, we have $r+m+1 \notin U$. By Lemma
\ref{lem3} with $n=r+m<2r-1$, we have
$$\chi_{U}(m+1)+\chi_{U}(1)
=\chi_{U}(m)+\chi_{U}(0)+\chi_{A}(r+m+1)-\chi_{U}(r+m+1).$$
Since $$\chi_{U}(m)=0,\quad \chi_{U}(0)=1,\quad
\chi_{U}(1)=0,\quad \chi_{U}(r+m+1)=0,$$ it follows that
$\chi_{U}(m+1)-\chi_{A}(r+m+1)=1$. So $m+1\in U$. By $m\notin U$,
$m+1\in U$ and
$$D_2(m+1)=D_2(2^{d+1}+2^{d+2}M)=1+D_2(M)=D_2(m)-d,$$  we have $2\nmid d$ and $2\nmid D_2(M)$.
Hence $d\ge 1$ and $M\ge 1$. Noting that $u\leq k-1$, we have $u\leq j+1$. Since $M\geq1$ and $d\geq1$, it follows that $m<2^{j+2}-1-2$, and so $m+2^u<2^{j+2}-1-2+2^{j+1}\leq r-2$. Noting that $m+2^u\notin U$ from $D_2(m+2^u)=D_2(m)$ and $m\not\in U$. By Lemma \ref{lema4}, we have $m+2^u+1\notin U$. Since $M\geq1$,  it follows that $2^u<m+1<2^{u+1}$. By $m+1\in U$, we have $m+2^u+1\in U$, a contradiction.
Hence $2\mid m$. If $m=r-1$, then by Lemma
\ref{lem3} with $n=r+m=2r-1$, we have
$$\chi_{U}(r)+\chi_{U}(1)
=\chi_{U}(r-1)+\chi_{U}(0)+\chi_{A}(2r)-\chi_{U}(2r)-1.$$
Since $$\chi_{U}(r)=0,\quad \chi_{U}(1)=0,\quad \chi_{U}(r-1)=1,\quad
\chi_{U}(0)=1,\quad \chi_{U}(2r)=0,$$ it follows that
$\chi_{A}(2r)=-1$, a contradiction. So
$m<r-1$. Let
$$m=\sum_{i=c}^d 2^i +2^{d+2}M$$ with $d\ge c\ge 1$ and $c,d,M\in
\mathbb{N}$. By Lemma \ref{lema1}, $m\notin U$. It follows from
$$D_2(m+2^d)=D_2(m)$$ that $m+2^d\notin U$. By $2\mid m$, $m\neq 2^d$
and Lemma \ref{lema4}, we have $m+2^d\ge r-1$. If $M\ge 1$, then by
$m<2^k$ and $r\geq 2^k-1$, we have
$$m+2^d=\sum_{i=c}^{d-1} 2^i+2^{d+1}+2^{d+2}M\le 2^k-2^{c+1}\le 2^k-2^2<r-1,$$ a contradiction. So
$M=0$. By $m<2^k$ we have $d\le k-1\le j+1$. Noting that $D_2(R)\geq1$, we have
$$m+2^d=\sum\limits_{i=c}^{d-1} 2^i+2^{d+1}\le \sum_{i=c}^{j} 2^i+2^{j+2}.$$
If $m+2^d=\sum\limits_{i=c}^{j} 2^i+2^{j+2}$, then by $m+2^d\not\in U$ and $2\nmid j$, we have $2\mid c$, and so $c\geq 2$. Thus,
$$m+2^d\leq\sum\limits_{i=2}^{d-1} 2^i+2^{d+1}<r-1,$$
a contradiction.
If $m+2^d<\sum\limits_{i=c}^{j} 2^i+2^{j+2}$, then $m+2^d<\sum\limits_{i=c}^{j} 2^i+2^{j+2}\leq r-1,$
a contradiction.

 {\bf Case 2: } $2\mid r$. Let
 $$r=\sum_{i=s}^t 2^i +2^{t+2}R,$$
 where $s,t,R$ are integers with $t\ge s\ge 1$ and $R\in \mathbb{N}$.
Since
$$r+2^s+1=1+2^{t+1}+2^{t+2}R,\quad
r+2^{s+1}+1=1+2^s+2^{t+1}+2^{t+2}R,$$ there exists $k\in \{ s,
s+1\} $ such that $r+2^k+1\in U$. By $2\mid r$ and Lemma \ref{lema2}, we have $r\ge
2^k$ and $m\le 2^k$.  Let $u$ be the integer with $2^u\le m<2^{u+1}$. Then $u\leq k\leq s+1$. If $m=2^u$, then by Lemma \ref{2x}, we have $2^u<r<2^{u+1}$. Noting that $r\geq 2^k$, we have $k<u+1$, and so $u=k$. If $k=s+1$, then $r=2^s+2^{s+1}\in U$, a contradiction. So $u=k=s$, $2^s<r<2^{s+1}$, a contradiction. Hence, $m$ is a not power of $2$, then $u\leq k-1\leq s$.  It is
clear that $D_2(m+2^u)=D_2(m)$. If $r=2^s$, then $r+2^s+1\in U$, and so $k=s$ and $u\leq s-1$. If $u=s-1$ and $m$ is even, then $s\geq2$, by Lemma \ref{lema4}, we have $$m=\sum_{i=c}^{s-1}2^{i},$$
where $c\geq1$. By Lemma \ref{lem3} with $n=r+m-1$, we have
\begin{eqnarray}\label{1eqc1j}&&\chi_{U}(m)=\chi_{U}(m-1)+\chi_{A}(r+m)-\chi_{U}(r+m).\end{eqnarray}
Since $$\chi_{U}(m)=0,\chi_{A}(r+m)=0,\chi_{U}(r+m)=1,$$ it follows from \eqref{1eqc1j} that
$$\chi_{U}(m-1)=1.$$
Since $m-1\in U$, then $2\mid c$. Since $m\notin U$, then $2\nmid s$, and so $s\geq3$. For $n=r+2m-1<2(r+m)-1$, similar to the proof of Lemma \ref{lemh}, we have
\begin{equation*}\label{x1}\chi_{F_{s}}(m-1)=\chi_{F_{s}}(m)+\chi_A(r+2m)-\chi_{E_{s}}(r+2m).\end{equation*}
Since $$\chi_{E_{s}}(r+2m)=\chi_{E_{s}}(2^{s+1}+\sum_{i=c+1}^{s-1}2^{i})=\chi_V(\sum_{i=c+1}^{s-1}2^{i}-1)=0,$$
it follows from $\chi_{F_{s}}(m-1)=0$ and $\chi_{F_{s}}(m)=1$ that
$\chi_A(r+2m)=-1$, a contradiction. So $m$ is odd. Similarly, for $n=r+2m-1$, we have
\begin{equation*}\label{x1}\chi_{F_{s}}(m-1)=\chi_{F_{s}}(m)+\chi_A(r+2m)-\chi_{E_{s}}(r+2m).\end{equation*}
Since $$\chi_{E_{s}}(r+2m)=\chi_{E_{s}}(2^{s+1}+2m-2^s)=\chi_V(2m-2^s-1)=0,$$
it follows from $\chi_{F_{s}}(m-1)=0$ and $\chi_{F_{s}}(m)=1$ that
$\chi_A(r+2m)=-1$, a contradiction. So $u\leq s-2$. Since $m\geq2$, then $u\geq1$ and $s\geq3$. Then $m+2^u<2^{u+1}+2^u\le 2^{s-1}+2^{s-2}<r-1$.
If $r\geq2^s+2^{s+1}$, then by $r\not\in U$, we have $r\geq2^s+2^{s+1}+2^{s+2}$. So $m+2^u<2^{u+1}+2^u\le 2^{s+1}+2^s<r-1$. Noting that $m\geq2$, we have $u\geq1$. By Lemma \ref{lema1}, we have $m\notin U$, and so $m+2^u\notin U$ since $D_2(m+2^u)=D_2(m)$. It follows from Lemma \ref{lema4} that $m$ is odd, $m+2^u+1\notin U$ and $r+m+2^u+1\notin U$. Noting that $m$ is odd, we may write
$$m=\sum_{i=0}^d 2^i+2^{d+2}M.$$
If $M=0$, then $u=d$ and $m+2^u+1=2^d+2^{d+1}\in U$, a
contradiction. So $M\ge 1$.
If $u\leq s-2$, then by $M\geq1$, we have $2^u<m+1<2^{u+1}$, and so $m+2^u+1<2^{u+1}+2^{u}\leq 2^{s-1}+2^{s-2}<r$. Hence,
$$D_2(r+m+2^u+1)=D_2(r)+D_2(m+2^u+1),$$
it follows from $r\notin U$ and $m+2^u+1\notin U$ that $r+m+2^u+1\in U$, a contradiction with $r+m+2^u+1\notin U$. So $u\in \{s-1,s\}$ and $r\geq 2^s+2^{s+1}$. If $u=s-1$, then
$$D_2(r+m+2^u+1)=D_2(1+m-2^{s-1}+2^{t+1}+2^{t+2}R)=D_2(R)+D_2(m+2^u+1),$$
it follows from $r+m+2^u+1\notin U$ and $m+2^u+1\notin U$ that $2\mid D_2(R)$. By Lemma \ref{lem3} with $n=r+2m-1$, we have
$$\chi_{U}(2m)+\chi_{U}(m)=\chi_{U}(2m-1)+\chi_{U}(m-1)+\chi_{A}(r+2m)-\chi_{U}(r+2m).$$
Since $\chi_{U}(2m)=\chi_{U}(m)=0,\chi_{U}(2m-1)+\chi_{U}(m-1)=1$,
it follows that $\chi_{U}(r+2m)=1$. Noting that $u=s-1$, we have
$$D_2(r+2m)=D_2(m)+D_2(R),$$
it follows from $m\notin U$ and $2\mid D_2(R)$ that $r+2m\notin U$, a contradiction. So $u=s$. It follows from $u\leq k-1\leq s$ that $k=s+1$. By definition of $k$, we have
$r+1+2^{s+1}\in U$, and so $2\nmid D_2(R)$. Noting that $m+2^d<r-1$ and $m+2^d\notin U$, by Lemma \ref{lema4}, we have $m+2^d+1\notin U$.
Let $m+2^d+1=2^s+M_1$. By $u=s$, we have
$$m+2^d+1=2^d+2^{d+1}+2^{d+2}M\le 2^d+2^{d+1}+\cdots
+2^s=2^{s+1}-2^d.$$ It follows that
$$D_2(m+2^d+1)=D_2(2^s+M_1)=1+D_2(M_1).$$ In view of $m+2^d+1\notin
U$, we have $2\mid D_2(M_1)$. Since $M_1<2^s<2^{t+1}$ and
$$r+m+2^d+1=r+2^s+M_1=2^{t+1}+2^{t+2}R+M_1,$$
we have $D_2(r+m+2^d+1)=1+D_2(R)+D_2(M_1)$. It follows from $2\nmid
D_2(R)$ and $2\mid D_2(M_1)$ that $2\mid D_2(r+m+2^d+1)$, a
contradiction with $r+m+2^d+1\notin U$.

Up to now, we have proved $m\geq r$. Theorem \ref{thm2} follows from Lemma \ref{999999}, Lemma \ref{55555} and Lemma \ref{lemy2} immediately.
\end{proof}


\begin{thebibliography}{30}

\bibitem{Bala15} R. Balasubramanian, S. Giri, {\it On additive representation
functions,} Int. J. Number Theory 11 (2015), 1165--1176.

\bibitem{Benevides} F. Benevides, J. Hulgan, N. Lemons, C. Palmer, A. Riet and J. P. Wheeler,
{\it Additive properties of a pair of sequences,} Acta Arith. 139 (2009), 185--197.

\bibitem{ChenChen2021} S.Q. Chen, Y.G. Chen, {\it Integer sets with identical representation functions, II,} European J. Combin. 94 (2021) 103--293.

\bibitem{Chentangyang} S.Q. Chen, M. Tang, Q.H. Yang, {\it On a problem of Chen and Lev,} Bull. Aust. Math. Soc. 99 (2019) 15--22.

\bibitem{ChenWang2003} Y.G. Chen, B. Wang, {\it On additive properties of two special
sequences,} Acta Arith. 110 (2003) 299--303.

\bibitem{ChenLev}  Y.G. Chen, V.F. Lev, {\it Integer sets with identical representation functions,} Integers 16 (2016), A36.

\bibitem{Cilleruelo}  J. Cilleruelo and J. Ru\'{e}, {\it On a question of S\'{a}rkozy and S\'{o}s for bilinear forms,} Bull. Lond. Math. Soc. 41 (2009), 274--280.


\bibitem{Dombi} G. Dombi, {\it Additive properties of certain sets,} Acta Arith. 103 (2002) 137--146.


\bibitem{erdos1} P. Erd\H{o}s, A. S\'{a}rk\"{o}zy, {\it Problems
and results on additive properties of general sequences, I,}
Pacific J. Math. 118 (1985), 347--357.

\bibitem{erdos2} P. Erd\H{o}s, A. S\'{a}rk\"{o}zy, {\it Problems
and results on additive properties of general sequences, II,}
Acta Math. Hungar. 48 (1986), 201--211.

\bibitem{erdos3} P. Erd\H{o}s, A. S\'{a}rk\"{o}zy, V. T. S\'{o}s, {\it Problems
and results on additive properties of general sequences, III,}
Studia Sci. Math. Hungar. 22 (1987), 53--63.

\bibitem{erdos4} P. Erd\H{o}s, A. S\'{a}rk\"{o}zy, V. T. S\'{o}s, {\it Problems
and results on additive properties of general sequences, IV,} in:
Number Theory, Proceedings, Ootacamund, India, 1984, in: Lecture
Notes in Math., vol. 1122, Springer-Verlag, Berlin, 1985, pp.
85--104.

\bibitem{erdos5} P. Erd\H{o}s, A. S\'{a}rk\"{o}zy, V. T. S\'{o}s, {\it Problems
and results on additive properties of general sequences, V,}
Monatsh. Math. 102 (1986), 183--197.

\bibitem{fang} J.H. Fang, {\it Representation functions avoiding integers with density zero,}
European J. Combin. 102 (2022), 103490, 7pp.

\bibitem{Grekos} G. Grekos, L. Haddad, C. Helou, J. Pihko, {\it Representation functions,
Sidon sets and bases,} Acta Arith. 130 (2007), 149--156.

\bibitem{Jiao} K.J. Jiao, C. S\'{a}ndor, Q.H. Yang, J.Y. Zhou, {\it On integer sets with the same representation functions,} Bull. Aust. Math. Soc. 106 (2022), 224--235.

\bibitem{Lev}  V.F. Lev, {\it Reconstructing integer sets from their representation functions,}
Electron. J. Combin. 11 (2004), R78.


\bibitem{Kiss} S.Z. Kiss, C. S\'{a}ndor, {\it Partitions of the set of
nonnegative integers with the same representation functions,}
Discrete Math. 340 (2017) 1154--1161.

\bibitem{Kiss1} S.Z. Kiss, C. S\'{a}ndor, {\it On the structure of sets which has coinciding representation function,} Integers 19 (2019), A66.

\bibitem{nathanson78} M. B. Nathanson, {\it Representation functions of sequences in additive number theory,} Proc. Amer. Math. Soc. 72 (1978), 16--20.

\bibitem{nathanson03} M. B. Nathanson, {\it Unique representation bases for the integers,} Acta Arith. 108 (2003), 1--8.


\bibitem{QuEle} Z.-H. Qu, {\it Partitions of $\mathbb{Z}_m$ with the same weighted representation
functions,} Electron. J. Combin. 21 (2014), Paper 2.55.


\bibitem{Sandor} C. S\'{a}ndor, {\it Partitions of natural numbers and their
representation functions,} Integers 4 (2004), A18.


\bibitem{Sun} C.F. Sun and Z. Cheng, {\it On the structure of finite sets with the same representation functions,} J. Number Theory 241 (2022), 352--363.


\bibitem{Tang} M. Tang,  {\it Partitions of the set of natural numbers and their
representation functions,} Discrete Math. 308 (2008) 2614--2616.

\bibitem{Tang16} M. Tang, {\it Partitions of natural numbers and their
representation functions,} Chinese Ann. Math. Ser A 37 (2016),
41--46. For English version, see Chinese J. Contemp. Math. 37
(2016) 39--44.
%

\bibitem{YangChen2012} Q.H. Yang, Y.G. Chen, {\it Partitions of natural
numbers with the same weighted representation functions,} J. Number
Theory 132 (2012)  3047--3055.

\bibitem{YuTang} W. Yu, M. Tang,  {\it A note on partitions of natural numbers and
their representation functions,} Integers 12 (2012), A53.




\end{thebibliography}
\end{document}